\theoremstyle{plain}
\newtheorem{theorem}{\bf Theorem}[section]
\newtheorem{lemma}[theorem]{\bf Lemma}
\newtheorem{proposition}[theorem]{\bf Proposition}
\newtheorem{corollary}[theorem]{\bf Corollary}
\theoremstyle{definition}
\newtheorem{definition}[theorem]{\bf Definition}
\newtheorem{remark}[theorem]{\bf Remark}
\newcommand{\eqa}[1]{
\begin{align*}
#1
\end{align*}}
\newcommand{\nai}[2]{\langle #1,#2\rangle}
\newcommand{\dom}[1]{{{\rm{dom}}{(#1)}}}
  \newcommand{\subsubsubsection}{\@startsection{paragraph}{4}{\z@}%
    {1.0\Cvs \@plus.5\Cdp \@minus.2\Cdp}%
    {.1\Cvs \@plus.3\Cdp}%
    {\reset@font\sffamily\normalsize}
  }
\title{On Borel Equivalence Relations Related To Self-Adjoint Operators}
\author{Hiroshi Ando@\and Yasumichi Matsuzawa}
\begin{document}
\maketitle
\begin{abstract} 
In a recent work, we initiated the study of Borel equivalence relations defined on the Polish space ${\rm{SA}}(H)$ of self-adjoint operators on a Hilbert space $H$, focusing on the difference between bounded and unbounded operators. In this paper, we extend the analysis and show how the difficulty of specifying the domains of self-adjoint operators is reflected in Borel complexity of associated equivalence relations.
More precisely, we show that the equality of domains, regarded as an equivalence relation on ${\rm{SA}}(H)$ is continously bireducible with the orbit equivalence relation of the standard Borel group $\ell^{\infty}(\mathbb{N})$ on $\mathbb{R}^{\mathbb{N}}$. Moreover, we show that generic self-adjoint operators have purely singular continuous spectrum equal to $\mathbb{R}$.
%In this paper we study  the domain equivalence relation $E_{\rm{dom}}^{{\rm{SA}}(H)}$ on the space ${\rm{SA}}(H)$ of all self-adjoint operators on $H\cong \ell^2$ given by $AE_{\rm{dom}}^{{\rm{SA}}(H)}B\Leftrightarrow \dom{A}=\dom{B}$ and determine its exact Borel complexity: $E_{\rm{dom}}^{{\rm{SA}}(H)}$ is an $F_{\sigma}$ equivalence relation which is continuously bireducible with the orbit equivalence relation $E_{\ell^{\infty}}^{\mathbb{R}^{\mathbb{N}}}$ of the standard Borel group $\ell^{\infty}=\ell^{\infty}(\mathbb{N},\mathbb{R})$ on $\mathbb{R}^{\mathbb{N}}$. 
%This, by Rosendal's Theorem, shows that  $E_{\rm{dom}}^{{\rm{SA}}(H)}$ is universal for $K_{\sigma}$ equivalence relations. Moreover, we show that generic self-adjoint operators have purely singular continuous spectrum equal to $\mathbb{R}$. 
\end{abstract}
2010 AMS subject classification: \ Primary 03E15,  Secondary: 34L05\\
Keywords: Unbounded self-adjoint operators, Borel equivalence relations
\section{Introduction}
In the recent paper \cite{AM14}, the authors have studied Borel complexity of various equivalence relations defined on the space ${\rm{SA}}(H)$ of all (not necessarily bounded) self-adjoint operators on a separable and infinite-dimensional Hilbert space $H$ equipped with the strong resolvent topology (SRT). 
One major difference between bounded and unbounded operators is that due to the domain problems, ${\rm{SA}}(H)$ is not even a vector space: recall that the {\it sum} of self-adjoint operators $A,B$ is defined as the operator $C$ with $\dom{C}=\dom{A}\cap \dom{B}$ and $C\xi:=A\xi+B\xi,\ \xi\in \dom{C}$. 
In general, there is no reason to expect that $C$ is densely defined even if $\dom{A},\dom{B}$ are dense. 
In fact, Israel \cite{Israel04} has shown that if $A\in {\rm{SA}}(H)$ has empty essential spectrum, then the set of all unitaries $u$ satisfying $\dom{A}\cap u\cdot \dom{A}=\{0\}$ forms a norm dense $G_{\delta}$ subset of the unitary group $\mathcal{U}(H)$. Thus $\dom{A+uAu^*}=\{0\}$ for norm-generic $u$.   
 Therefore, it is natural to expect that the domain equivalence relation \[AE_{\rm{dom}}^{{\rm{SA}}(H)}B\Leftrightarrow \dom{A}=\dom{B}\] has a high degree of complexity. 
In this paper, we determine its exact Borel complexity by showing that  $E_{\rm{dom}}^{{\rm{SA}}(H)}$ is an $F_{\sigma}$ (but not $K_{\sigma}$) equivalence relation, and that it is continuously bireducible (see $\S2$ for the definition) with the $\ell^{\infty}(\mathbb{N},\mathbb{R})$-orbit equivalence relation $E_{\ell^{\infty}}^{\mathbb{R}^{\mathbb{N}}}$ defined on $\mathbb{R}^{\mathbb{N}}$ by \[(a_n)_{n=1}^{\infty}E_{\ell^{\infty}}^{\mathbb{R}^{\mathbb{N}}}(b_n)_{n=1}^{\infty}\Leftrightarrow \sup_{n\in \mathbb{N}}|a_n-b_n|<\infty,\ \ \ (a_n)_{n=1}^{\infty},(b_n)_{n=1}^{\infty}\in \mathbb{R}^{\mathbb{N}}.\]
Since Rosendal \cite[Proposition 19]{Rosendal05} has shown that $E_{\ell^{\infty}}^{\mathbb{R}^{\mathbb{N}}}$ is universal for $K_{\sigma}$-equivalence relations, $E_{\rm{dom}}^{{\rm{SA}}(H)}$ also enjoys this property. Moreover, since by this universality the notorious $K_{\sigma}$ equivalence relation $E_1$ (see $\S$3) is Borel reducible to $E_{\rm{dom}}^{{\rm{SA}}(H)}$, $E_{\rm{dom}}^{{\rm{SA}}(H)}$ is not Borel reducible to any orbit equivalence relation of a Borel action of a Polish group, by the Kechris-Louveau Theorem \cite[Theorem 4.2]{KechrisLouveau97}. Moreover, we show that the related equivalence relation $E_{{\rm{dom}},u}^{{\rm{SA}}(H)}$ (unitary equivalence of domains) given by 
\[AE_{{\rm{dom}},u}^{{\rm{SA}}(H)}B\Leftrightarrow \exists u\ \text{unitary}\ [u\cdot \dom{A}=\dom{B}]\]
is Borel reducible to a $K_{\sigma}$ equivalence relation, whence $E_{{\rm{dom}},u}^{{\rm{SA}}(H)}\le_B E_{\rm{dom}}^{{\rm{SA}}(H)}$ as a corollary. Finally, we strengthen our previous genericity result \cite[Theorem 3.17 (1)]{AM14} that elements in ${\rm{SA}}(H)$ which have essential spectrum $\mathbb{R}$, form a dense $G_{\delta}$ set. Namely we prove that elements in ${\rm{SA}}(H)$ which have purely singular continuous spectrum $\mathbb{R}$, forms a dense $G_{\delta}$ set in ${\rm{SA}}(H)$. This shows that although every self-adjoint operator can be approximated by diagonal operators (Weyl-von Neumann Theorem),  generic self-adjoint operators have rather pathological spectral properties (cf. \cite{ChokskiNadkarni,LatrachPaoliSimonnet}). The proof is based on Simon's Wonderland Theorem \cite{Simon95}.  
\section{Preliminaries}
We refer the reader to \cite[$\S 2$]{AM14} for relevant definitions and notation. Basic facts about operator theory (resp. descriptive set theory) can be found in \cite{Schmudgen} (resp. in \cite{Gao09,Hjorth00,Kechris96}). Below we give some definitions here for convenience. Let $H$ be a separable infinite-dimensional Hilbert space. 
\begin{definition}
The {\it strong resolvent topology} (SRT) on the space ${\rm{SA}}(H)$ of all self-adjoint operators on $H$ is the coarsest topology which makes the map ${\rm{SA}}(H)\ni A\mapsto (A-i)^{-1}\in \mathbb{B}(H)$ continuous with respect to the strong operator topology (SOT). 
\end{definition}
 ${\rm{SA}}(H)$ is Polish with respect to SRT. The domain of $A\in {\rm{SA}}(H)$ is written as $\dom{A}$. 
\begin{definition}
Let $E$ (resp. $F$) be equivalence relations on a Polish space $X$ (resp. $Y$). We say that $E$ is {\it Borel (resp. continuously) reducible} to $F$, denoted $E\le_BF$ (resp. $E\le_c F$), if there is a Borel (resp. continuous) map $f\colon X\to Y$ which is a reduction of $E$ to $F$ (i.e., $xEy\Leftrightarrow f(x)Ff(y)$ holds for $x,y\in X$). 
If moreover $f$ is injective, we say that $E$ is {\it Borel (resp. continuously) embeddable} into $F$, denoted $E\sqsubseteq_BF$ (resp. $E\sqsubseteq_cF$).  
We say that $E$ is {\it Borel (resp. continuously) bireducible} with $F$, if $E\le_BF$ and $F\le_BE$ (resp. $E\le_cF$ and $F\le_cE$) hold. In this case we write $E\sim_BF$ (resp. $E\sim_cF$). 
\end{definition}
In the next section we consider the following three equivalence relations.    
\begin{definition}We define $E_{\ell^{\infty}}^{\mathbb{R}^{\mathbb{N}}}$, $E_{\rm{dom}}^{{\rm{SA}}(H)}$ and $E_{{\rm{dom}},u}^{{\rm{SA}}(H)}$ by:
\begin{itemize}
\item[(1)] The equivalence relation $E_{\ell^{\infty}}^{\mathbb{R}^{\mathbb{N}}}$ on the Polish space $\mathbb{R}^{\mathbb{N}}$ is  the orbit equivalence relation of the action of the standard Borel group $\ell^{\infty}=\ell^{\infty}(\mathbb{N})$ on $\mathbb{R}^{\mathbb{N}}$ by addition. In other words, we have   $(a_n)_{n=1}^{\infty}E_{\ell^{\infty}}^{\mathbb{R}^{\mathbb{N}}}(b_n)_{n=1}^{\infty}\Leftrightarrow \sup_{n\in \mathbb{N}}|a_n-b_n|<\infty$ for $(a_n)_{n=1}^{\infty},(b_n)_{n=1}^{\infty}\in \mathbb{R}^{\mathbb{N}}$.
\item[(2)] The equivalence relation $E_{\rm{dom}}^{{\rm{SA}}(H)}$ on ${\rm{SA}}(H)$ is given by $AE_{\rm{dom}}^{{\rm{SA}}(H)}B\Leftrightarrow \dom{A}=\dom{B}$.
\item[(3)] The equivalence relation $E_{{\rm{dom}},u}^{{\rm{SA}}(H)}$ on ${\rm{SA}}(H)$ is given by 
$AE_{{\rm{dom}},u}^{{\rm{SA}}(H)}B\Leftrightarrow \exists u\in \mathcal{U}(H)\ [u\cdot \dom{A}=\dom{B}]$. 
\end{itemize}  
\end{definition}
We also recall a result on operator ranges. Recall that a subspace $\mathcal{R}\subset H$ is an {\it operator range} in $H$, if  $\mathcal{R}$ is equal to the range $\text{Ran}(T)$ for some $T\in \mathbb{B}(H)$. 
We may choose $T$ to be self-adjoint with $0\le T\le 1$. In this case, we set $H_n:=E_T((2^{-n-1},2^{-n}])H\ (n=0,1,\cdots)$. Then $H_n$ are mutually orthogonal closed subspaces of $H$ with $H=\bigoplus_{n=0}^{\infty}H_n$ (by the density of $\mathcal{R}$). 
$\{H_n\}_{n=0}^{\infty}$ are called the {\it associated subspaces for} $T$ (see \cite[$\S$3]{FillmoreWilliams}  for details). Since we are only concerned with dense operator ranges, we state the following result \cite[Theorem 3.3]{FillmoreWilliams} for dense operator ranges (in this case the condition (1) of  the cited theorem is automatic). 
\begin{theorem}[K\"othe, Fillmore-Williams]\label{thm: Koethe theorem}
Let $\mathcal{R}$ and $\mathcal{S}$ be dense operator ranges in $H$ with associated subspaces $\{H_n\}_{n=0}^{\infty}$ and $\{K_n\}_{n=0}^{\infty}$, respectively. Then there exists $u\in \mathcal{U}(H)$ such that $u\mathcal{R}=\mathcal{S}$, if and only if there exists $k\ge 0$ such that for each $n\ge 0$ and $l\ge 0$, one has
\eqa{
\dim(H_n\oplus  \cdots \oplus H_{n+l})& \le \dim (K_{n-k}\oplus \cdots \oplus K_{n+l+k})\\
\dim(K_n\oplus  \cdots \oplus K_{n+l})& \le \dim (H_{n-k}\oplus \cdots \oplus H_{n+l+k}),
}
where we use the convention $H_m=K_m=\{0\}$ for $m<0$.
\end{theorem}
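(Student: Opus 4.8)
The plan is to put both operator ranges into a ``dyadic model'' form and then recognise the stated condition as a comparison of log-scale spectral multiplicities. First, as $\mathcal R$ is a dense operator range we may write $\mathcal R=\mathrm{Ran}(A)$ for a positive injective contraction $A$; with $H_n=E_A((2^{-n-1},2^{-n}])H$ one has, componentwise and with all operators commuting, $\tfrac12 D\le A\le D$ where $D:=\bigoplus_{n\ge0}2^{-n}P_{H_n}$, hence $\tfrac14 D^2\le A^2\le D^2$, and the Douglas range inclusion lemma gives $\mathcal R=\mathrm{Ran}(D)$. Replacing $A$ by $D$ (and $B$ by $D':=\bigoplus_n 2^{-n}P_{K_n}$ likewise), it suffices to treat block-diagonal operators. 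Moreover, for $u\in\mathcal U(H)$ one has $u\mathcal R=\mathcal S$ iff $\mathrm{Ran}(uDu^*)=\mathrm{Ran}(D')$, which by the Douglas lemma holds iff $\lambda^{-1}(D')^2\le uD^2u^*\le\lambda(D')^2$ for some $\lambda\ge1$; so the theorem amounts to saying that $D^2$ and $(D')^2$ are \emph{comparably unitarily equivalent} exactly when the ``log-scale multiplicity functions'' $n\mapsto\dim H_n$ and $n\mapsto\dim K_n$ agree up to a bounded index shift.

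For the ``if'' direction, assume the displayed inequalities hold for some $k$. Interpreting them through Hall's marriage theorem (after reducing, in the standard way, arbitrary finite unions of indices to intervals, and using $\sum_n\dim H_n=\sum_n\dim K_n=\aleph_0$), one obtains a ``bandwidth-$k$ transport plan'': cardinals $c_{nj}\ge0$ with $c_{nj}=0$ whenever $|n-j|>k$, $\sum_j c_{nj}=\dim H_n$ and $\sum_n c_{nj}=\dim K_j$. Split $H_n=\bigoplus_j H_n^{(j)}$ and $K_j=\bigoplus_n K_j^{(n)}$ with $\dim H_n^{(j)}=\dim K_j^{(n)}=c_{nj}$, and let $u\in\mathcal U(H)$ carry $H_n^{(j)}$ onto $K_j^{(n)}$. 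Then $uH_n\subseteq\bigoplus_{|j-n|\le k}K_j$, so on each $K_j^{(n)}$ the operators $uD^2u^*$ and $(D')^2$ differ by the factor $4^{j-n}\in[4^{-k},4^k]$, giving $4^{-k}(D')^2\le uD^2u^*\le 4^k(D')^2$; hence $\mathrm{Ran}(uDu^*)=\mathrm{Ran}(D')$, i.e.\ $u\mathcal R=\mathcal S$.

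For the ``only if'' direction, suppose $u\mathcal R=\mathcal S$, fix $\lambda$ with $\lambda^{-1}(D')^2\le uD^2u^*\le\lambda(D')^2$, and set $T:=uD^2u^*$, which is unitarily equivalent to $D^2$ and hence has the same spectral multiplicities: $\dim E_T((4^{-m-1},4^{-m}])H=\dim H_m$. Since $\lambda^{-1}(D')^2\le T\le\lambda(D')^2$, the min-max principle applied to one-sided spectral subspaces gives $\dim E_T((\beta,\infty))H\le\dim E_{(D')^2}((\beta/\lambda,\infty))H$ and $\dim E_T((\beta,\infty))H\ge\dim E_{(D')^2}((\lambda\beta,\infty))H$ for every $\beta>0$; one then wants to upgrade this to the two-sided band estimate $\dim E_T((\alpha,\beta])H\le\dim E_{(D')^2}((\alpha/\lambda,\lambda\beta])H$. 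Taking $(\alpha,\beta]=(4^{-n-l-1},4^{-n}]$, so that $E_T((\alpha,\beta])H=H_n\oplus\cdots\oplus H_{n+l}$, and translating back through the dyadic scale turns this (together with the symmetric inequality, obtained by interchanging the roles of $D$ and $D'$) into precisely the displayed inequalities with $k=\lceil\tfrac12\log_2\lambda\rceil$.

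The step I expect to be the main obstacle is exactly the passage from the one-sided counting functions to the two-sided band estimate in the last paragraph. It is false in general that a subspace on which the quadratic form of an operator is two-sidedly pinched is ``spectrally'' of the corresponding band --- a vector that is an equal mixture of a large-eigenvalue and a small-eigenvalue eigenvector has a middling Rayleigh quotient --- so a naive min-max over Rayleigh quotients overcounts, and this mixing phenomenon is precisely why K\"othe's condition is phrased in terms of the dimensions of \emph{blocks} $H_n\oplus\cdots\oplus H_{n+l}$ rather than only the cumulative dimensions $\dim(H_0\oplus\cdots\oplus H_p)$ (with which the one-sided estimates alone are easily seen to be consistent, but which are strictly weaker when infinite multiplicities are present). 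The point to be exploited is that producing a mixing subspace of a given dimension inside $\mathrm{Ran}\,E_T((\alpha,\beta])$ forces the comparison constant $\lambda$ to be at least of the order of the spectral gap being bridged, so the mixed spectrum is in fact ``caught'' by the $\lambda$-dilation of the band; making this quantitative --- and keeping track of possibly infinite multiplicities, and of the domain of the unbounded logarithm if one prefers to argue via $\log D^2=\log(D')^2+\text{(bounded)}$ --- is the technical heart of the proof. By contrast, the model reduction and the transport/Hall argument of the ``if'' direction are routine.
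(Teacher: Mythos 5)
This theorem is not proved in the paper at all: it is quoted verbatim from Fillmore--Williams \cite[Theorem 3.3]{FillmoreWilliams} (due to K\"othe), so there is no internal proof to compare against and your attempt has to stand on its own. Your model reduction (replacing the range operators by $D=\bigoplus_{n\ge0}2^{-n}P_{H_n}$, $D'=\bigoplus_{n\ge0}2^{-n}P_{K_n}$ via Douglas, and recasting $u\mathcal R=\mathcal S$ as $\lambda^{-1}(D')^2\le uD^2u^*\le\lambda(D')^2$) is correct, and the ``if'' direction is essentially the standard construction. The one thing you assert rather than prove there is that the interval inequalities produce a bandwidth-$k$ transport plan with cardinal-valued marginals: Hall-type feasibility needs an actual argument when the $\dim H_n,\dim K_n$ may be $\aleph_0$ (an infinite max-flow/min-cut statement, or a direct construction separating the infinite and finite parts), and the reduction from arbitrary finite index sets to intervals should be spelled out; but this is true and fixable, so I count it as an omission rather than an error.

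The genuine gap is the one you flag yourself: the ``only if'' direction is not proved. You reduce it to the band estimate $\dim E_T((\alpha,\beta])H\le\dim E_S((\alpha/\lambda,\lambda\beta])H$ for $\lambda^{-1}S\le T\le\lambda S$, correctly observe that one-sided counting functions and naive min--max do not yield it because of spectral mixing, and then leave ``the technical heart'' as a heuristic, so as written the proof is incomplete. The missing step does have a short argument that avoids Rayleigh quotients entirely: with $T=uD^2u^*$, $S=(D')^2$, $M:=u(H_n\oplus\cdots\oplus H_{n+l})$, show the orthogonal projection onto $K_{n-k}\oplus\cdots\oplus K_{n+l+k}$ is injective on $M$ once $4^{k}>2\lambda$. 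Indeed, if a unit vector $\eta=u\xi\in M$ is orthogonal to that block, split $\eta=\eta_+ +\eta_-$ into its components in $\bigoplus_{j<n-k}K_j$ and $\bigoplus_{j>n+l+k}K_j$. Since $\langle\eta,S\eta\rangle\le\lambda\langle\eta,T\eta\rangle=\lambda\langle\xi,D^2\xi\rangle\le\lambda4^{-n}$ while $S\ge4^{-(n-k)}$ on the first block, one gets $\|\eta_+\|^2\le\lambda4^{-k}$; since $T\le\lambda S$ gives $S^{-1}\le\lambda T^{-1}$ in the form sense (anti-monotonicity of the inverse), and $\langle\eta,T^{-1}\eta\rangle=\langle\xi,D^{-2}\xi\rangle\le4^{\,n+l+1}$ while $S^{-1}\ge4^{\,n+l+k+1}$ on the second block, one gets $\|\eta_-\|^2\le\lambda4^{-k}$. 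Then $1=\|\eta_+\|^2+\|\eta_-\|^2\le2\lambda4^{-k}<1$, a contradiction; hence $\dim(H_n\oplus\cdots\oplus H_{n+l})=\dim M\le\dim(K_{n-k}\oplus\cdots\oplus K_{n+l+k})$, and the symmetric inequality follows by exchanging the roles of $D$ and $D'$. This is precisely the quantitative ``compare both the operators and their inverses on the same vector'' point you gestured at; supplying it (or citing Fillmore--Williams, as the paper does) is necessary to close the proof.
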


Finally, for $A\in {\rm{SA}}(H)$, we denote by $\sigma_{\rm{p}}(A)$, $\sigma_{\rm{ac}}(A)$ and $\sigma_{\rm{sc}}(A)$ the set of eigenvalues, absolutely continuous spectrum, and singular continuous spectrum of $A$, respectively (see \cite[$\S$VII.2]{ReedSimonI}). We put $\sigma_{\rm{ac}}(A)=\emptyset$ (resp. $\sigma_{\rm{sc}}(A)=\emptyset$) if there is no absolutely continuous part (resp. singular continuous part) of $A$, and we say that $A$ has {\it purely singular continuous spectrum}, if $\sigma_{\rm{p}}(A)=\emptyset=\sigma_{\rm{ac}}(A)$ holds. 
\section{Main Results}
Now we state the main result.
\begin{theorem}\label{thm: E_dom is borel bireducible to ell^infty}
$E_{{\rm{dom}}}^{{\rm{SA}}(H)}$ is an $F_{\sigma}$ equivalence relation which is continuously bireducible with $E_{\ell^{\infty}}^{\mathbb{R}^{\mathbb{N}}}$. 
\end{theorem}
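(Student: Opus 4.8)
The plan is to prove the two reductions separately, using the structure theory of operator ranges to translate domain questions into the combinatorial condition of Theorem \ref{thm: Koethe theorem}, and then a further discretization to reduce to $\ell^{\infty}$. First I recall that for $A\in {\rm{SA}}(H)$, the domain $\dom{A}=\text{Ran}((1+A^2)^{-1/2})$ is a dense operator range, with associated subspaces determined by the spectral projections of $(1+A^2)^{-1/2}$; conversely, every dense operator range arises as the domain of some self-adjoint operator (one builds $A$ diagonally from a self-adjoint $T$ with $0\le T\le 1$ and dense range by decreeing $\dom{A}=\text{Ran}(T)$ and letting $A$ act appropriately on $\text{Ran}(T)$, for instance as a suitable function of $T^{-1}$). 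Thus $E_{\rm{dom}}^{{\rm{SA}}(H)}$ is, modulo this identification, the relation of \emph{equality} of dense operator ranges — which by the $k=0$ case of Theorem \ref{thm: Koethe theorem} (applied with $u=1$, or rather by a direct argument that equal ranges have equal closures of partial sums of associated subspaces) is governed by the sequence of dimensions $(\dim H_n)_n$. Encoding these dimensions, or rather their growth, as a point of $\mathbb{R}^{\mathbb{N}}$ and comparing with the combinatorial inequalities should yield $E_{\rm{dom}}^{{\rm{SA}}(H)}\sim_c E_{\ell^{\infty}}^{\mathbb{R}^{\mathbb{N}}}$.

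For the reduction $E_{\ell^{\infty}}^{\mathbb{R}^{\mathbb{N}}}\le_c E_{\rm{dom}}^{{\rm{SA}}(H)}$: given $(a_n)_n\in\mathbb{R}^{\mathbb{N}}$, I would (after a harmless normalization, e.g.\ replacing $a_n$ by $\max(a_n,0)$ plus a fixed divergent sequence so that partial sums tend to $\infty$) build a self-adjoint operator $A_{(a_n)}$ that is diagonal with respect to a fixed orthonormal basis $(e_n)_n$, with eigenvalue sequence growing like $e^{a_1+\cdots+a_n}$ (or a similar telescoping choice), so that $\dom{A_{(a_n)}}=\{\xi : \sum e^{2(a_1+\cdots+a_n)}|\langle\xi,e_n\rangle|^2<\infty\}$. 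Then $\dom{A_{(a_n)}}=\dom{A_{(b_n)}}$ iff the two weight sequences are comparable iff $\sup_n|(a_1+\cdots+a_n)-(b_1+\cdots+b_n)|<\infty$; composing with the (continuous, $\ell^{\infty}$-equivariant up to bounded error) shift-by-partial-sums map $\mathbb{R}^{\mathbb{N}}\to\mathbb{R}^{\mathbb{N}}$, $(a_n)\mapsto(a_1+\cdots+a_n)_n$, which is itself a reduction of $E_{\ell^{\infty}}^{\mathbb{R}^{\mathbb{N}}}$ to itself in one direction and whose inverse (differencing) handles the other, gives the desired continuous reduction. Continuity in SRT is checked via the resolvent $(A_{(a_n)}-i)^{-1}$, which depends continuously (SOT) on the eigenvalues and hence on $(a_n)$.

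For the converse $E_{\rm{dom}}^{{\rm{SA}}(H)}\le_c E_{\ell^{\infty}}^{\mathbb{R}^{\mathbb{N}}}$: to each $A$ I assign the sequence counting the logarithms of dimensions of the associated subspaces of $(1+A^2)^{-1/2}$, i.e.\ roughly $n\mapsto \log_2\dim(H_0\oplus\cdots\oplus H_n)$ if those are finite, or a bookkeeping of the first $n$ with $\dim H_n=\infty$ otherwise; the point is that by Theorem \ref{thm: Koethe theorem} with $u=1$, equality of domains is equivalent to these cumulative-dimension sequences being within a bounded shift of each other, which is an $E_{\ell^{\infty}}^{\mathbb{R}^{\mathbb{N}}}$-type condition after taking logs (a bounded shift in index corresponds, for sequences growing at most exponentially, to a bounded additive perturbation, and one truncates/pads to force this controlled growth into the assignment). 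Borelness — indeed continuity — of $A\mapsto(\dim H_n(A))_n$ in SRT is the technical heart: one expresses $\dim E_{(1+A^2)^{-1/2}}((2^{-n-1},2^{-n}])H = \dim E_{A^2}([2^{2n}-1, 2^{2n+2}-1))H$ as a limit/sup of ranks of spectral projections, which are lower semicontinuous functions of $A$ in SRT at continuity points of the spectral measure, and handles the (measure-zero-in-a-suitable-sense) boundary issues by a standard perturbation of the cutoff levels.

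The main obstacle I expect is precisely this last continuity/Borel-measurability analysis: the map $A\mapsto \dim H_n(A)$ is genuinely only semicontinuous and can jump when $2^{-n}$ or $2^{-n-1}$ lies in the point spectrum of $(1+A^2)^{-1/2}$, so one must either choose the dyadic cutoffs generically, or average over a family of cutoffs, or — most robustly — replace $\dim H_n$ by a coarser comparison invariant (e.g.\ the eventual behavior of $\dim(H_0\oplus\cdots\oplus H_n)$ modulo the bounded-shift equivalence of Theorem \ref{thm: Koethe theorem}) that is manifestly Borel; verifying that this coarser invariant still exactly captures $E_{\rm{dom}}^{{\rm{SA}}(H)}$ on both sides, and that the resulting map lands in $\mathbb{R}^{\mathbb{N}}$ continuously, is where the real work lies. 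The $F_{\sigma}$ assertion is then essentially free: $\dom{A}=\dom{B}$ iff for some $k$ the two families of finite-rank inequalities of Theorem \ref{thm: Koethe theorem} hold with $u=1$, each such condition being closed in SRT$\times$SRT, so $E_{\rm{dom}}^{{\rm{SA}}(H)}$ is a countable union of closed sets.
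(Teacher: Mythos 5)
Your central step --- using Theorem \ref{thm: Koethe theorem} ``with $u=1$'' to claim that equality of dense operator ranges is governed by the dimension sequence $(\dim H_n)_n$ of associated subspaces --- is wrong, and it breaks the direction $E_{\rm{dom}}^{{\rm{SA}}(H)}\le_c E_{\ell^{\infty}}^{\mathbb{R}^{\mathbb{N}}}$ as well as your $F_{\sigma}$ argument. The K\"othe--Fillmore--Williams theorem characterizes the existence of \emph{some} unitary carrying one range onto the other; it is not an equality criterion, and equality of ranges is strictly finer than any condition on dimension data. Indeed, $\dom{A}$ and $u\cdot\dom{A}=\dom{uAu^*}$ have identical associated dimension sequences for every unitary $u$, yet are typically distinct (by Israel's result quoted in the introduction they generically even intersect trivially). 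So any map built from $(\dim H_n(A))_n$, cumulative or logarithmic, would declare $A$ and $uAu^*$ equivalent and hence fails the ``$\varphi(A)E_{\ell^{\infty}}\varphi(B)\Rightarrow \dom{A}=\dom{B}$'' half of being a reduction; what such an invariant sees is $E_{{\rm{dom}},u}^{{\rm{SA}}(H)}$, which is exactly how the paper uses Theorem \ref{thm: Koethe theorem} in Proposition \ref{prop: E_dom,u is essentially K_sigma}, not $E_{\rm{dom}}^{{\rm{SA}}(H)}$. The paper's actual tool for this theorem is Douglas' range inclusion theorem (Theorem \ref{thm: Douglas theorem}): $\dom{A}\subset\dom{B}$ iff $(|A|+1)^{-2}\le k(|B|+1)^{-2}$ for some $k\in\mathbb{N}$, which immediately gives the $F_{\sigma}$ bound (closed conditions indexed by $k$ and by vectors $\xi$), and the reduction to $E_{\ell^{\infty}}^{\mathbb{R}^{\mathbb{N}}}$ is simply $\varphi(A)=\bigl(\log\langle\xi_n,(|A|+1)^{-2}\xi_n\rangle\bigr)_n$ for a fixed dense sequence $(\xi_n)$; two-sided comparability of the quadratic forms on a dense set extends to all of $H$ by continuity, so no spectral multiplicities and none of the semicontinuity issues you flagged ever enter.

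The converse direction in your sketch is the right general idea (diagonal operators with exponential weights) but has two concrete defects. First, the normalization ``replace $a_n$ by $\max(a_n,0)$'' is not equivalence-preserving: $a_n=-n$ and $b_n=0$ become equal after truncation although $\sup_n|a_n-b_n|=\infty$; similarly, using eigenvalues $e^{a_n}$ directly fails because negative tails collapse ($a_n=-n$ and $b_n=-2n$ give bounded operators, both with domain $H$). Second, the partial-sums map is not a reduction of $E_{\ell^{\infty}}^{\mathbb{R}^{\mathbb{N}}}$ to itself in the direction you need: $a_n=1/n$, $b_n=0$ have bounded difference but unbounded partial-sum difference, so the composite map does not satisfy ``$\sup_n|a_n-b_n|<\infty\Rightarrow$ equal domains.'' The paper avoids both problems by first splitting the positive and negative part of each coordinate into two separate coordinates (the map $(x_n)\mapsto(\tilde{x}_n)$, which preserves the $\ell^{\infty}$ equivalence exactly) and then taking the diagonal operator with eigenvalues $\exp(\tfrac{1}{2}\tilde{x}_n)-1\ge 0$, so that $(|\psi(\alpha)|+1)^{-2}$ has eigenvalues $\exp(-\tilde{x}_n)$ and domain equality is exactly $\sup_n|\tilde{x}_n-\tilde{y}_n|<\infty$.
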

Before going to the proof, let us state an immediate corollary. We need two important results. Recall that a subspace of a topological space is called $K_{\sigma}$ or $\sigma$-compact, if it is a countable union of compact subsets.  
First, Rosendal \cite[Proposition 19]{Rosendal05} has shown that 
\begin{theorem}[Rosendal]\label{thm: Rosendal theorem}$E_{\ell^{\infty}}^{\mathbb{R}^{\mathbb{N}}}$ is universal for $K_{\sigma}$ equivalence relations in the sense that any $K_{\sigma}$ equivalence relation on a Polish space is Borel reducible to $E_{\ell^{\infty}}^{\mathbb{R}^{\mathbb{N}}}$.
\end{theorem}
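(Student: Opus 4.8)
The plan is to reduce the universality of $E_{\ell^{\infty}}^{\mathbb{R}^{\mathbb{N}}}$ for $K_{\sigma}$ equivalence relations to a single, concrete ``master'' $K_{\sigma}$ relation and then to handle that master relation directly. First I would record the standard normal form for $K_{\sigma}$ equivalence relations: if $E$ is a $K_{\sigma}$ equivalence relation on a Polish space $X$, write $E=\bigcup_{m}C_m$ with each $C_m\subseteq X\times X$ compact; after replacing $C_m$ by $\bigcup_{j\le m}(C_j\cup C_j^{-1}\cup\Delta_X)$ and then by suitable finite ``composition powers'' $C_m\circ\cdots\circ C_m$, one may assume the $C_m$ are increasing, symmetric, contain the diagonal, and satisfy $C_m\circ C_m\subseteq C_{m+1}$. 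Thus $xEy$ iff $(x,y)\in C_m$ for some $m$, and the function $\rho(x,y):=\min\{m:(x,y)\in C_m\}$ behaves like a bounded-integer-valued ``quasi-metric'': $\rho(x,y)=\rho(y,x)$, $\rho(x,x)=0$, and $\rho(x,z)\le 1+\max(\rho(x,y),\rho(y,z))$ whenever the right-hand side is finite.

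Next I would fix a countable dense set $D=\{d_k:k\in\mathbb{N}\}\subseteq X$ and define $f\colon X\to\mathbb{R}^{\mathbb{N}}$ by $f(x)=(a^x_k)_{k\in\mathbb{N}}$, where $a^x_k$ encodes, in a single real number, the finite information ``for which pairs $k,l$ is $(d_k,d_l)$ within $\rho$-distance $\le$ some threshold controlled by a point near $x$''. Concretely, the cleanest route is to first reduce $E$ to the orbit equivalence relation of $\mathbb{Z}^{(\mathbb{N})}$ (finitely supported integer sequences) acting on a suitable space, or — more directly — to reduce to the relation on $\mathbb{R}^{\mathbb{N}}$ given by eventual agreement together with a uniform bound, and then note this embeds into $E_{\ell^{\infty}}^{\mathbb{R}^{\mathbb{N}}}$. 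The key computation is that $xEy$ holds iff the two sequences $f(x),f(y)$ differ by a bounded amount: one direction uses that $(x,y)\in C_m$ forces, via the triangle-type inequality and density of $D$, that every coordinate discrepancy is bounded by a constant depending only on $m$; the converse uses that an unbounded discrepancy in the coordinates produces, by compactness of the $C_m$ and a diagonalization over $k\to\infty$, a pair of limit points witnessing $\neg(xEy)$.

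Alternatively — and this is the approach I would actually write up, since it minimizes ad hoc coding — I would invoke that it suffices to embed one known universal $K_{\sigma}$ object, namely $\mathbb{Z}^{\mathbb{N}}$ modulo $\ell^{\infty}$ (equivalently $c_0$-perturbation is too strong; we want $\ell^{\infty}$), into $E_{\ell^{\infty}}^{\mathbb{R}^{\mathbb{N}}}$, which is trivial, and to show every $K_{\sigma}$ $E$ Borel-reduces to $\mathbb{Z}^{\mathbb{N}}/\ell^{\infty}$. For the latter, using the normal form above, set $b^x_k:=\rho(x,d_k)$ when this is finite and $b^x_k:=0$ otherwise; since $\rho(x,d_k)\le\rho(x,d_l)+\rho(d_l,d_k)$, density of $D$ gives that the map $x\mapsto(b^x_k)_k$ is Borel (each coordinate is a countable union of $F_{\sigma}$ sets coming from the compact $C_m$) and is a reduction: $xEy$ implies $|b^x_k-b^y_k|\le\rho(x,y)$ for all $k$, while if $x\not\mathrel{E}y$ then for each $m$ there is $k$ with $d_k$ so close to $x$ that $\rho(x,d_k)$ is small but $\rho(y,d_k)\ge m$ (else $y$ would be $C_{m+1}$-related to $x$), making the discrepancy unbounded.

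The main obstacle is the Borel-measurability and the correct choice of the ``distance'' coding: $\rho(x,d_k)$ need not be Borel-measurable as written if the $C_m$ are merely compact rather than clopen, and one must be careful that changing the witnessing $d_k$ can shift $\rho$ only by a bounded amount — this is exactly where the subadditivity $C_m\circ C_m\subseteq C_{m+1}$ and the density of $D$ are both essential. I expect the bulk of the write-up to consist of (i) verifying the normal form reduction is legitimate (composition of compact sets is compact, and the countably many bookkeeping steps preserve ``$K_{\sigma}$''), and (ii) the two-sided estimate showing $x\mapsto(b^x_k)_k$ is a genuine reduction, with the $\neg(xEy)$ direction requiring a short compactness argument to extract the unbounded coordinates. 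Everything else is routine, and the final composition with the trivial embedding $\mathbb{Z}^{\mathbb{N}}/\ell^{\infty}\hookrightarrow E_{\ell^{\infty}}^{\mathbb{R}^{\mathbb{N}}}$ completes the proof.
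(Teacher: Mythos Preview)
The paper does not prove this theorem at all: Theorem~\ref{thm: Rosendal theorem} is stated with attribution to Rosendal \cite[Proposition 19]{Rosendal05} and is used as a black box (to deduce Corollaries~\ref{cor: E_dom is K_sigma universal} and~\ref{cor: E_dom,u is reducible to E_dom}). There is therefore nothing in the paper to compare your argument against.

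That said, your sketch has a genuine gap that you should be aware of. In the backward direction of your proposed reduction $x\mapsto (b^x_k)_k$ with $b^x_k=\rho(x,d_k)$ (and $0$ when $\rho(x,d_k)=\infty$), you assert that if $x\not\mathrel{E}y$ then one can find $d_k$ topologically close to $x$ with $\rho(x,d_k)$ small. But density of $\{d_k\}$ in the Polish topology of $X$ has no bearing on $E$-relatedness: there is no reason any $d_k$ should lie in the $E$-class of $x$, and topological proximity does not force $(x,d_k)\in C_m$ for small $m$. In the extreme case where neither $[x]_E$ nor $[y]_E$ meets $\{d_k\}$, your map sends both $x$ and $y$ to the zero sequence, so it is not a reduction. (Relatedly, your normal form asks for $\Delta_X\subseteq C_0$ with $C_0$ compact; this already forces $X$ to be compact. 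One only gets $\Delta_X\subseteq\bigcup_m C_m$, since a $K_\sigma$ equivalence relation forces $X$ itself to be $\sigma$-compact, but no single $C_m$ need contain the diagonal.)

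Rosendal's actual argument avoids this by not trying to measure $\rho$-distance to a fixed dense set. If you want to reconstruct it, the cleanest route is to first observe (as you implicitly do) that $X$ must be $\sigma$-compact, then embed $X$ homeomorphically into a compact metric space and work with a countable basis there, coding for each $x$ and each pair (basic open set, index $m$) whether the open set meets the $C_m$-neighborhood of $x$; the resulting map lands in a product of finite sets and the $\ell^\infty$-difference is controlled on both sides via the normal form and compactness. The point is that the coding must reference the ambient topology \emph{and} the filtration $\{C_m\}$ simultaneously, not just $\rho(x,d_k)$.
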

Secondly, recall the $K_{\sigma}$ equivalence relation $E_1$ on $\mathcal{C}^{\mathbb{N}}$ (where $\mathcal{C}=2^{\mathbb{N}}$) defined by 
\[(a_n)_{n=1}^{\infty}E_1(b_n)_{n=1}^{\infty}\Leftrightarrow \exists N\in \mathbb{N}\ \forall n\ge N \ [a_n=b_n].\]
Since $\mathcal{C}$ and $\mathbb{R}$ are Borel isomorphic, $E_1$ may alternatively be defined (when talking about Borel reducibility) as the tail equivalence relation on $\mathbb{R}^{\mathbb{N}}$. 
 Kechris-Louveau \cite[Theorem 4.2]{KechrisLouveau97} has shown that $E_1$ is an obstruction for a given equivalence relation to be Borel reducible to orbit equivalence:
\begin{theorem}[Kechris-Louveau]\label{thm: KechrisLouveau}
$E_1\not\le_BE_G^X$ for any Polish group $G$ and Polish $G$-space $X$. 
\end{theorem}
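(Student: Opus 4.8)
The plan is to deduce the theorem from a structural fact about hypersmooth equivalence relations. Recall that $E_1$ is \emph{hypersmooth}: identifying $\mathbb{R}^{\mathbb{N}}$ with $\mathbb{R}^{\{0,\dots,n-1\}}\times\mathbb{R}^{\{n,n+1,\dots\}}$ and letting $F_n$ be the smooth equivalence relation ``agreement on all coordinates $\ge n$'', one has $F_n\subseteq F_{n+1}$ and $E_1=\bigcup_{n}F_n$. Suppose, toward a contradiction, that $f\colon\mathbb{R}^{\mathbb{N}}\to X$ is a Borel reduction of $E_1$ to $E_G^X$. By the Becker--Kechris change-of-topology theorem we may refine the topology of $X$ so that $G$ acts continuously while $X$ remains Polish and $f$ remains Borel. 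Every orbit $C=G\cdot x$ is then Borel and, through the canonical bijection $C\cong G/G_x$ with $G_x$ the closed stabiliser, carries the image of the meager $\sigma$-ideal of $G/G_x$; since category is $G$-quasi-invariant, this gives a $G$-invariant notion of ``comeager/meager subset of an orbit'', so that $E_G^X$ is \emph{idealistic}, and Vaught transforms with respect to the action become available as the basic tool for handling the Borel map $f$.

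The key claim, which carries all the weight, is: \emph{every hypersmooth Borel equivalence relation that is Borel reducible to the orbit equivalence relation of a Borel action of a Polish group is in fact Borel reducible to $E_0$}. Applying this claim to $E_1$ and then invoking the classical fact that $E_1\not\le_B E_0$ --- indeed $E_1$ is not Borel reducible to any countable Borel equivalence relation --- we obtain $E_1\le_B E_G^X\Rightarrow E_1\le_B E_0$, a contradiction, which proves the theorem. (The fact $E_1\not\le_B E_0$ is itself proved by a Baire-category argument exploiting that every $E_1$-class is dense in $\mathbb{R}^{\mathbb{N}}$ while $E_0$-classes are countable; we take it as known here.)

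It remains to establish the key claim, which is where the real work lies. The method is a recursive Cantor-scheme (fusion) construction performed simultaneously on conditions in the source space $\mathbb{R}^{\mathbb{N}}$ and on conditions in the acting group $G$. For each $n$, $f$ maps each $F_n$-class into a single orbit $C\cong G/G_x$, and so restricts to a Borel map from each finite-dimensional fibre $\mathbb{R}^{\{0,\dots,n-1\}}$ into $C$; using continuity of the action and Vaught transforms one analyses, relative to the canonical meager ideal of $C$, how large the image of such a fibre map is. Controlling this analysis coherently as $n$ increases, one extracts from $f$ a Borel map into $2^{\mathbb{N}}$ that witnesses $E_1\le_B E_0$. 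The main obstacle is precisely this coherence: one must track, uniformly and simultaneously across the infinitely many layers $F_0\subseteq F_1\subseteq\cdots$, how the finite-dimensional fibres of $f$ are positioned inside the $G$-orbits with respect to the meager ideal, so that the local data fit together into a single global reduction to $E_0$ rather than merely to something hyperfinite modulo a finite error. Handling this bookkeeping --- via one carefully organised recursion on conditions in $\mathbb{R}^{\mathbb{N}}$ and in $G$ --- is the technical centrepiece of the Kechris--Louveau argument \cite{KechrisLouveau97}.
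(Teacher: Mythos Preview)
The paper does not prove this theorem at all: it is quoted as \cite[Theorem 4.2]{KechrisLouveau97} and used as a black box to derive Corollary \ref{cor: E_dom is K_sigma universal}. So there is no ``paper's own proof'' to compare your proposal against.

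That said, your sketch is a reasonable outline of the original Kechris--Louveau argument. The reduction to the key claim (hypersmooth and $\le_B$ a Polish group action implies $\le_B E_0$), together with the standing fact $E_1\not\le_B E_0$, is exactly the logical structure of \cite{KechrisLouveau97}. One small caution: your parenthetical justification of $E_1\not\le_B E_0$ (``$E_1$-classes are dense while $E_0$-classes are countable'') is too glib --- density of classes and countability of target classes do not by themselves rule out a Borel reduction; the actual argument uses measure or category on the source in a more careful way (or one invokes the stronger fact that $E_1$ is not Borel reducible to any countable Borel equivalence relation). For the purposes of this paper, however, none of this is needed: the theorem is simply cited.
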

Here, $E_G^X$ stands for the orbit equivalence relation associated with the Borel $G$-action. 
Since there are many orbit equivalence relations that are turbulent (in the sense of \cite{Hjorth00}) and Borel reducible to $E_{\ell^{\infty}}^{\mathbb{R}^{\mathbb{N}}}$ (e.g. $\ell^p(\mathbb{N})\ (1\le p<\infty)$ actions on $\mathbb{R}^{\mathbb{N}}$), Theorems \ref{thm: E_dom is borel bireducible to ell^infty},  \ref{thm: Rosendal theorem} and \ref{thm: KechrisLouveau} imply that:
\begin{corollary}\label{cor: E_dom is K_sigma universal}
$E_{{\rm{dom}}}^{{\rm{SA}}(H)}$ is universal for $K_{\sigma}$-equivalence relations. In particular, it is unclassifiable by countable structures, not Borel reducible to orbit equivalence relation of any Polish group action.
\end{corollary}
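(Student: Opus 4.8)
The plan is to route everything through the bounded positive operator $R_A:=(1+A^2)^{-1}\in\mathbb{B}(H)$ attached to $A\in{\rm{SA}}(H)$. Recall that $R_A$ is injective with dense range, $0<R_A\le 1$, and that $\dom{A}=\dom{|A|}=\text{Ran}\big((1+A^2)^{-1/2}\big)$. Applying Douglas' range inclusion theorem to the bounded positive operators $(1+A^2)^{-1/2}$ and $(1+B^2)^{-1/2}$ gives $\dom{A}\subseteq\dom{B}\Leftrightarrow R_A\le\lambda R_B$ for some $\lambda>0$, hence
\[
A\,E_{\rm{dom}}^{{\rm{SA}}(H)}\,B\quad\Longleftrightarrow\quad R_A\asymp R_B,\qquad\text{where}\quad R\asymp S:\Leftrightarrow\exists N\in\mathbb{N}\ \big(N^{-1}S\le R\le N S\big).
\]
Moreover $A\mapsto R_A=(A-i)^{-1}(A+i)^{-1}$ is continuous from $({\rm{SA}}(H),{\rm{SRT}})$ to $\mathbb{B}(H)$ with the strong operator topology, directly from the definition of the SRT (using that multiplication is jointly strongly continuous on norm-bounded sets). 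Thus the theorem reduces to analysing the relation $\asymp$ on positive contractions along the continuous map $A\mapsto R_A$.

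\textbf{The $F_{\sigma}$ claim and $E_{\rm{dom}}^{{\rm{SA}}(H)}\le_c E_{\ell^{\infty}}^{\mathbb{R}^{\mathbb{N}}}$.} Fix a countable dense set $D\subseteq H\setminus\{0\}$. The key soft observation is that ``$R\le NS$'' is a \emph{closed} condition that may be tested on $D$ only: since $\eta\mapsto\langle(NS-R)\eta,\eta\rangle$ is norm-continuous on $H$, one has $R\le NS$ iff $\langle R\eta,\eta\rangle\le N\langle S\eta,\eta\rangle$ for every $\eta\in D$. This gives the $F_{\sigma}$ claim at once: for each $N$ the set $\{(A,B):N^{-1}R_B\le R_A\le N R_B\}$ is the intersection, over $\eta\in D$, of the closed sets $\{(A,B):\langle R_A\eta,\eta\rangle\le N\langle R_B\eta,\eta\rangle\}$ and $\{(A,B):\langle R_B\eta,\eta\rangle\le N\langle R_A\eta,\eta\rangle\}$ (closed because $A\mapsto\langle R_A\eta,\eta\rangle$ is SRT-continuous), hence closed, and by the displayed equivalence $E_{\rm{dom}}^{{\rm{SA}}(H)}$ is the union of these sets over $N\in\mathbb{N}$. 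The same observation yields the reduction: define $\Psi\colon{\rm{SA}}(H)\to\mathbb{R}^{D}$ by $\Psi(A):=\big(\log\langle R_A\eta,\eta\rangle\big)_{\eta\in D}$, which is well defined (as $\langle R_A\eta,\eta\rangle=\|(1+A^2)^{-1/2}\eta\|^2>0$ for $\eta\neq 0$) and SRT-continuous; combining the displayed equivalence with the dense-testing remark shows $A\,E_{\rm{dom}}^{{\rm{SA}}(H)}\,B$ iff $\sup_{\eta\in D}|\Psi(A)_\eta-\Psi(B)_\eta|<\infty$, i.e. iff $\Psi(A)\,E_{\ell^{\infty}}^{\mathbb{R}^{\mathbb{N}}}\,\Psi(B)$ after identifying $\mathbb{R}^{D}$ with $\mathbb{R}^{\mathbb{N}}$.

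\textbf{The reduction $E_{\ell^{\infty}}^{\mathbb{R}^{\mathbb{N}}}\le_c E_{\rm{dom}}^{{\rm{SA}}(H)}$.} First reduce $E_{\ell^{\infty}}^{\mathbb{R}^{\mathbb{N}}}$ continuously to its restriction to $[1,\infty)^{\mathbb{N}}$ via $(a_n)_n\mapsto(a_1^{+}+1,\,a_1^{-}+1,\,a_2^{+}+1,\,a_2^{-}+1,\dots)$ with $t^{+}:=\max(t,0)$, $t^{-}:=\max(-t,0)$; faithfulness follows from $|t^{\pm}-s^{\pm}|\le|t-s|\le|t^{+}-s^{+}|+|t^{-}-s^{-}|$. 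Then, fixing an orthonormal basis $(e_n)_n$ of $H$, send $(c_n)_n\in[1,\infty)^{\mathbb{N}}$ to the unbounded positive self-adjoint diagonal operator $A_c:=\mathrm{diag}(2^{c_n})$, so that $\dom{A_c}=\{x\in H:\sum_n 2^{2c_n}|\langle x,e_n\rangle|^2<\infty\}$; this map is SRT-continuous because $(A_c-i)^{-1}=\mathrm{diag}\big((2^{c_n}-i)^{-1}\big)$ depends on $(c_n)_n$ coordinatewise with uniform norm bound $1$. If $\sup_n|c_n-c_n'|<\infty$ the two weighted $\ell^2$-spaces clearly coincide; conversely, if $\sup_n|c_n-c_n'|=\infty$ then, after interchanging $c$ and $c'$ if necessary, $c_{n_k}-c_{n_k}'\to\infty$ along a subsequence, and placing weight $2^{-2c_{n_k}'}2^{-k}$ on the coordinate $e_{n_k}$ (thinning so that $c_{n_k}-c_{n_k}'\ge k$) produces a vector of $H$ lying in $\dom{A_{c'}}$ but not in $\dom{A_c}$. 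Composing the two maps gives the required continuous reduction.

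\textbf{Where the difficulty lies.} The only genuinely delicate point is the faithfulness of this last encoding: a direct diagonal encoding of all of $\mathbb{R}^{\mathbb{N}}$ is \emph{not} a reduction, because a bounded additive perturbation of $(c_n)_n$ with $c_n,c_n'\to-\infty$ is invisible --- both $A_c$ and $A_{c'}$ then become bounded with domain the whole of $H$ --- which is exactly why one first normalises into $[1,\infty)^{\mathbb{N}}$ (and it is precisely the bound $c_n\ge 1$ that forces the constructed witness vector to lie in $H$). Beyond this, the proof only uses three standard operator-theoretic facts --- $\dom{A}=\text{Ran}((1+A^2)^{-1/2})$, Douglas' theorem, and the SRT-to-strong continuity of $A\mapsto(1+A^2)^{-1}$ --- together with the soft ``test on a dense set'' remark, so I expect no serious further obstacle.
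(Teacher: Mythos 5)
Your proposal is, in substance, a proof of Theorem \ref{thm: E_dom is borel bireducible to ell^infty} (the $F_{\sigma}$ property and the continuous bireducibility of $E_{{\rm{dom}}}^{{\rm{SA}}(H)}$ with $E_{\ell^{\infty}}^{\mathbb{R}^{\mathbb{N}}}$), and that part is correct and close in spirit to the paper's: you work with $(1+A^2)^{-1}$ where the paper uses $(|A|+1)^{-2}$, and in the direction $E_{\ell^{\infty}}^{\mathbb{R}^{\mathbb{N}}}\le_c E_{{\rm{dom}}}^{{\rm{SA}}(H)}$ you first normalise into $[1,\infty)^{\mathbb{N}}$ and then exhibit an explicit witness vector for the converse, whereas the paper uses its ``tilde'' map into $\mathbb{R}_{\ge 0}^{\mathbb{N}}$ and settles the converse by comparing the diagonal operators $T_{\psi(\alpha)}, T_{\psi(\beta)}$ via Douglas' theorem; both routes work. (Two small points: the SOT-continuity of $A\mapsto (A+i)^{-1}$ should be justified by functional calculus rather than by joint strong continuity of multiplication alone, and ``weight'' in your witness construction must be read as the squared coefficient $|\langle x,e_{n_k}\rangle|^2=2^{-2c'_{n_k}}2^{-k}$ -- if it is the coefficient itself, the divergence estimate for $\dom{A_c}$ can fail.)

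The genuine gap is that none of the three conclusions of the Corollary is actually derived. Bireducibility with $E_{\ell^{\infty}}^{\mathbb{R}^{\mathbb{N}}}$ does not by itself yield $K_{\sigma}$-universality: one needs Rosendal's theorem (Theorem \ref{thm: Rosendal theorem}) that $E_{\ell^{\infty}}^{\mathbb{R}^{\mathbb{N}}}$ is universal for $K_{\sigma}$ equivalence relations, an external input you never invoke. Likewise, non-reducibility to orbit equivalence relations of Polish group actions follows from nothing you prove; the paper gets it by observing that $E_1$ is $K_{\sigma}$, hence $E_1\le_B E_{{\rm{dom}}}^{{\rm{SA}}(H)}$ by universality, and then applying the Kechris--Louveau theorem (Theorem \ref{thm: KechrisLouveau}) that $E_1$ is not Borel reducible to any Polish group orbit equivalence relation. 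Unclassifiability by countable structures is then a consequence (isomorphism of countable structures is an $S_{\infty}$-orbit equivalence relation; the paper alternatively points to turbulent $\ell^p$-actions reducible to $E_{\ell^{\infty}}^{\mathbb{R}^{\mathbb{N}}}$). So what is missing is precisely the bridge from your (correct) bireducibility result to the statement at hand: Rosendal's universality theorem and the Kechris--Louveau $E_1$ obstruction, neither of which can be skipped or re-derived softly within your argument.
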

%It is known that in  many respects (dense) operator ranges (= space of domains of self-adjoint operators) have  nice properties close to closed subspaces (see e.g. \cite{FillmoreWilliams}). However:
%\begin{corollary}\label{cor: OR(H) is not standard Borel}
%There is no standard Borel structure on the space ${\rm{OR}}(H)$ of all operator ranges which makes the map ${\rm{SA}}(H)\ni A\mapsto \dom{A}\in {\rm{OR}}(H)$ Borel.
%\end{corollary}
%\begin{proof}
%If there were such a standard Borel structure, ${\rm{SA}}(H)\ni A\mapsto \dom{A}\in {\rm{OR}}(H)$ would be a Borel reduction of $E_{\rm{dom}}^{{\rm{SA}}(H)}$ to ${\rm{id}}_{{\rm{OR}}(H)}$, contradicting the fact that $E_{\rm{dom}}^{{\rm{SA}}(H)}$ is not smooth.
%\end{proof}
Now we prove Theorem \ref{thm: E_dom is borel bireducible to ell^infty} in few steps.
\begin{proposition}\label{prop: E_dom is F_sigma}$E_{\rm{dom}}^{{\rm{SA}}(H)}$ is an $F_{\sigma}$ equivalence relation which is not $K_{\sigma}$. 
\end{proposition}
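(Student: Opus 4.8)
The plan is to split the statement into two parts: first showing that $E_{\rm{dom}}^{{\rm{SA}}(H)}$ is $F_\sigma$, then showing it is not $K_\sigma$. For the $F_\sigma$ part, the key observation is that $\dom{A}$ is a dense operator range, since $\dom{A} = \text{Ran}((A^2+1)^{-1/2})$, and the natural Hilbert space norm on it is the graph norm $\|\xi\|_A := \|(A^2+1)^{1/2}\xi\| = (\|\xi\|^2 + \|A\xi\|^2)^{1/2}$. The crucial reformulation is: $\dom{A} = \dom{B}$ if and only if there are constants $c, C > 0$ such that $c\,\|\xi\|_A \le \|\xi\|_B \le C\,\|\xi\|_A$ for all $\xi$ in the common domain — equivalently, by the closed graph theorem, the two graph norms on the same vector space are equivalent. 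This is a closed condition once the constant is fixed; quantifying over rational constants $c, C$ gives a countable union of closed sets. So I would write $E_{\rm{dom}}^{{\rm{SA}}(H)} = \bigcup_{m\in\mathbb{N}} R_m$ where $A\,R_m\,B$ means $\tfrac1m\|\xi\|_A \le \|\xi\|_B \le m\|\xi\|_A$ for all $\xi\in H$ (interpreting $\|\xi\|_A = \infty$ for $\xi\notin\dom{A}$), and check that each $R_m$ is closed in the SRT$\times$SRT topology. The latter uses that SRT convergence $A_k \to A$ implies convergence of resolvents in SOT, hence suitable convergence of $(A_k^2+1)^{-1/2}$, which lets one pass the norm inequalities to the limit; this is the routine-but-careful technical point, best handled by testing the inequality on vectors $\xi = (A^2+1)^{-1/2}\eta$ and using functional calculus.

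For the "not $K_\sigma$" part, the cleanest route is to exhibit an explicit Borel (or continuous) reduction of some non-$K_\sigma$ equivalence relation into $E_{\rm{dom}}^{{\rm{SA}}(H)}$, or to directly invoke that $E_{\rm{dom}}^{{\rm{SA}}(H)}$ is Borel bireducible with $E_{\ell^\infty}^{\mathbb{R}^{\mathbb{N}}}$ — but since that is the content of the full Theorem~\ref{thm: E_dom is borel bireducible to ell^infty}, which is being proved \emph{after} this proposition, I should instead give a self-contained argument. The natural choice is to reduce $E_1$ (the tail equivalence relation on $\mathbb{R}^{\mathbb{N}}$), which is known not to be $K_\sigma$: if $E_{\rm{dom}}^{{\rm{SA}}(H)}$ were $K_\sigma$, then so would be everything Borel reducible to it. Concretely, fix an orthonormal basis $(e_n)$, decompose $H = \bigoplus_n H_n$ into infinite-dimensional blocks, and to a sequence $a = (a_n) \in \mathbb{R}^{\mathbb{N}}$ associate the diagonal self-adjoint operator $D_a$ acting as multiplication by $2^{a_n}$ (or $e^{a_n}$) times a fixed unbounded diagonal operator $\Lambda_n$ on $H_n$ with eigenvalues tending to infinity. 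Then $\dom{D_a} = \dom{D_b}$ reduces to a comparability-of-weights condition that captures $E_1$ (or a closely related $F_\sigma$-but-not-$K_\sigma$ relation); one then checks the map $a\mapsto D_a$ is continuous from $\mathbb{R}^{\mathbb{N}}$ to $({\rm{SA}}(H), \text{SRT})$ and is a reduction.

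The main obstacle I anticipate is the bookkeeping for the non-$K_\sigma$ reduction: one must choose the eigenvalue scales within each block $H_n$ so that $\dom{D_a}=\dom{D_b}$ is governed \emph{exactly} by the desired combinatorial condition on $(a_n)$ and $(b_n)$ — neither too coarse (losing injectivity of the reduction on classes) nor too fine. Getting this to land precisely on a relation already known to be non-$K_\sigma$, while keeping the continuity of $a\mapsto D_a$ transparent, is the delicate part; the $F_\sigma$ direction, by contrast, is largely a matter of carefully writing the graph-norm-equivalence condition as a countable union of SRT-closed sets and verifying closedness via resolvent convergence. A secondary point to be careful about: $\|\xi\|_A = \infty$ off the domain must be handled so that $R_m$ is genuinely closed, which is why phrasing everything in terms of the bounded operators $(A^2+1)^{-1/2}$ rather than the unbounded $A$ is essential.
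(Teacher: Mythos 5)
Your first half (the $F_\sigma$ part) is essentially the paper's argument in different clothing: the graph-norm comparability condition, once you test it on vectors $\xi=(A^2+1)^{-1/2}\eta$, is exactly the quadratic-form inequality between the bounded operators $(A^2+1)^{-1}$ (the paper uses $(|A|+1)^{-2}$ via Douglas' range inclusion theorem), and the union over integer constants of SRT$\times$SRT-closed sets goes through as you expect. That part is fine.

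The second half has a genuine gap. Your transfer principle --- ``if $E_{\rm{dom}}^{{\rm{SA}}(H)}$ were $K_\sigma$, then so would be everything Borel reducible to it'' --- is false: being $K_\sigma$ is a topological property of the relation as a subset of the square, and it is not inherited by Borel (or even continuous) preimages, so a reduction of a non-$K_\sigma$ relation into $E_{\rm{dom}}^{{\rm{SA}}(H)}$ proves nothing about $K_\sigma$-ness. Moreover your chosen test relation is a bad witness: $E_1$ in its standard presentation on $\mathcal{C}^{\mathbb{N}}$ \emph{is} $K_\sigma$ (the paper uses precisely this), and its tail-equivalence copy on $\mathbb{R}^{\mathbb{N}}$ fails to be $K_\sigma$ only for the trivial reason that $\mathbb{R}^{\mathbb{N}}$ is not $\sigma$-compact --- which shows that $K_\sigma$-ness is not even invariant under Borel bireducibility, and also points to the correct (and much shorter) argument, which is what the paper does: any $K_\sigma$ equivalence relation contains the diagonal, a closed subset of a $K_\sigma$ set, so the underlying Polish space would itself be $K_\sigma$; but ${\rm{SA}}(H)$ contains a closed homeomorphic copy of $\mathbb{R}^{\mathbb{N}}$ (e.g.\ the diagonal operators with respect to a fixed orthonormal basis), hence is not $K_\sigma$. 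So no reduction and no block-diagonal bookkeeping is needed; if you insisted on your route you would additionally have to make the map $a\mapsto D_a$ a homeomorphism onto a \emph{closed} subset and argue via the restriction of $E_{\rm{dom}}^{{\rm{SA}}(H)}$ to its image, at which point you are in effect running the diagonal argument anyway.
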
 
The proof relies on Douglas' range inclusion Theorem \cite{Douglas66} (cf. \cite[Theorem 2.1]{FillmoreWilliams}). 
\begin{theorem}[Douglas]\label{thm: Douglas theorem}
Let $A,B\in \mathbb{B}(H)$. Then ${\rm{Ran}}(A)\subset {\rm{Ran}}(B)$ holds if and only if 
there exists $\lambda>0$ such that $AA^*\le \lambda BB^*$.
\end{theorem}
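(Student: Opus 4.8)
\section*{Proof proposal for Theorem \ref{thm: Douglas theorem} (Douglas)}

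The plan is to route both implications through the factorization criterion: for $A,B\in\mathbb{B}(H)$,
\[
{\rm{Ran}}(A)\subseteq {\rm{Ran}}(B)\iff \exists\, C\in\mathbb{B}(H)\ [A=BC]\iff \exists\, \lambda>0\ [AA^*\le \lambda BB^*].
\]
The content of the theorem is the equivalence of the outer two conditions, and the middle one is the convenient bridge.

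First I would dispatch the soft direction, $AA^*\le\lambda BB^*\Rightarrow{\rm{Ran}}(A)\subseteq{\rm{Ran}}(B)$. The hypothesis says precisely that $\|A^*x\|^2=\langle AA^*x,x\rangle\le\lambda\langle BB^*x,x\rangle=\lambda\|B^*x\|^2$ for every $x\in H$. Hence the assignment $B^*x\mapsto A^*x$ is well defined and bounded (by $\sqrt{\lambda}$) on ${\rm{Ran}}(B^*)$; I would extend it by continuity to $\overline{{\rm{Ran}}(B^*)}$ and by zero on $\ker B=\overline{{\rm{Ran}}(B^*)}^{\perp}$, obtaining $D\in\mathbb{B}(H)$ with $DB^*=A^*$ on all of $H$. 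Taking adjoints gives $A=BD^*$, so ${\rm{Ran}}(A)={\rm{Ran}}(BD^*)\subseteq{\rm{Ran}}(B)$.

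For the converse I would build the factor $C$ by hand: assuming ${\rm{Ran}}(A)\subseteq{\rm{Ran}}(B)$, for each $x\in H$ the set $\{y:By=Ax\}$ is a nonempty coset of $\ker B$, and I let $Cx$ be its unique representative in $(\ker B)^{\perp}=\overline{{\rm{Ran}}(B^*)}$; uniqueness forces $C$ to be linear. The step I expect to be the main obstacle is showing $C\in\mathbb{B}(H)$, and this is where completeness of $H$ enters, via the closed graph theorem: if $x_n\to x$ and $Cx_n\to z$, then $z$ lies in the closed subspace $(\ker B)^{\perp}$ and $Bz=\lim BCx_n=\lim Ax_n=Ax$, so $z=Cx$ by uniqueness; hence the graph of $C$ is closed and $C$ is bounded. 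With $A=BC$ in hand the inequality is immediate: $AA^*=BCC^*B^*\le\|C\|^2BB^*$, so any $\lambda\ge\|C\|^2$ with $\lambda>0$ works (the degenerate case $A=0$ being trivially covered by any $\lambda>0$).

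All the real work is thus concentrated in the closed graph argument of the converse direction; the remainder is bookkeeping with adjoints and the identity $\langle TT^*x,x\rangle=\|T^*x\|^2$. I would also note that the construction simultaneously produces the factorization $A=BC$, which is the form in which this lemma is most often invoked (and the form that makes the application to operator ranges in Theorem \ref{thm: Koethe theorem} and the sequel transparent).
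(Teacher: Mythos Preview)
Your argument is correct and is essentially the standard proof due to Douglas: build the factorization $A=BC$ via the closed graph theorem in one direction, and define the bounded map $B^*x\mapsto A^*x$ in the other. Note, however, that the paper does not supply its own proof of this theorem; it merely cites \cite{Douglas66} and \cite{FillmoreWilliams} and then uses the result in the proof of Proposition~\ref{prop: E_dom is F_sigma}, so there is nothing in the paper to compare your proof against.
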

%Later we only use (i)$\Leftrightarrow $(ii) in the above theorem. 
%Next lemma is easily proved by functional calculus, hence we omit the proof.
%\begin{lemma}\label{lem |A|+1 inverse^2 is continuous}
%The map ${\rm{SA}}(H)\ni A\mapsto (|A|+1)^{-2}\in \mathbb{B}(H)$ is {\rm{SRT-SOT}} continuous. 
%\end{lemma}
%\begin{lemma}\label{lem: SA(H) is not K_sigma}
%${\rm{SA}}(H)$ is not $K_{\sigma}$. 
%\end{lemma}
%\begin{proof}
%It is easy to see that ${\rm{SA}}(H)$ contains a homeomorphic copy of $\mathbb{R}^{\mathbb{N}}$, which is easily shown to be non-$K_{\sigma}$. Thus ${\rm{SA}}(H)$ is not $K_{\sigma}$. 
%\end{proof}

\begin{proof}[Proof of Proposition \ref{prop: E_dom is F_sigma}]
It is clear that $\tau\colon {\rm{SA}}(H)^2\ni (A,B)\mapsto (B,A)\in {\rm{SA}}(H)^2$ is a homeomorphism. 
Define $\mathcal{S}:=\{(A,B)\in {\rm{SA}}(H)^2; \dom{A}\subset \dom{B}\}$. 
Since $E_{\rm{dom}}^{{\rm{SA}}(H)}=\mathcal{S}\cap \tau(\mathcal{S})$, it suffices to show that $\mathcal{S}$ is $F_{\sigma}$ in ${\rm{SA}}(H)^2$. 
For $A,B\in {\rm{SA}}(H)$, we have $\dom{A}=\text{Ran}((|A|+1)^{-1}), \dom{B}=\text{Ran}((|B|+1)^{-1})$. Therefore Theorem \ref{thm: Douglas theorem} shows that
\eqa{
\dom{A}\subset \dom{B}&\Leftrightarrow \exists \lambda>0\  [\ (|A|+1)^{-2}\le \lambda (|B|+1)^{-2}\ ]\\
&\Leftrightarrow \exists k\in \mathbb{N}\ \forall \xi\in H\ [\ \nai{\xi}{(|A|+1)^{-2}\xi}\le k \nai{\xi}{(|B|+1)^{-2}\xi} \ ].
}
Therefore $\mathcal{S}=\bigcup_{k\in \mathbb{N}}\bigcap_{\xi \in H}S_{k,\xi}$, where $S_{k,\xi}:=\{(A,B); \nai{\xi}{(|A|+1)^{-2}\xi}\le k \nai{\xi}{(|B|+1)^{-2}\xi}\}$.\\
It is easy to see that ${\rm{SA}}(H)\ni A\mapsto (|A|+1)^{-2}\in \mathbb{B}(H)$ is SRT-SOT continuous, hence each $S_{k,\xi}$ is SRT-closed. Therefore $\mathcal{S}$ is $F_{\sigma}$. The last assertion follows from the fact that ${\rm{SA}}(H)$ is not $K_{\sigma}$ (it contains a homeomorphic copy of $\mathbb{R}^{\mathbb{N}}$) and a well-known fact: note that if an equivalence relation $E$ on a Polish space $X$ is $K_{\sigma}$, then $X$ must be $K_{\sigma}$.
\end{proof}
%The next lemma is easy to prove. We therefore omit the proof.
%\begin{lemma}\label{lem: easy lemma}
%Let $\{\xi_n\}_{n=1}^{\infty}$ be a CONS for $H$, and let $\{a_n\}_{n=1}^{\infty},\ \{b_n\}_{n=1}^{\infty}\subset \mathbb{R}$ be bounded sequences. Let $A=\sum_{n=1}^{\infty}a_n\nai{\xi_n}{\ \cdot\ }\xi_n,B=\sum_{n=1}^{\infty}b_n\nai{\xi_n}{\ \cdot\ }\xi_n$. Then $A\le B$ if and only if $a_n\le b_n$ holds for every $n\in \mathbb{N}$. 
%\end{lemma}
\begin{proof}[Proof of Theorem \ref{thm: E_dom is borel bireducible to ell^infty}]
$E_{\rm{dom}}^{{\rm{SA}}(H)}$ is $F_{\sigma}$ but not $K_{\sigma}$ by Proposition \ref{prop: E_dom is F_sigma}. We show that $E_{\rm{dom}}^{{\rm{SA}}(H)}$ is continuously bireducible with $E_{\ell^{\infty}}^{\mathbb{R}^{\mathbb{N}}}$. We first show that $E_{\rm{dom}}^{{\rm{SA}}(H)}\le_c E_{\ell^{\infty}}^{\mathbb{R}^{\mathbb{N}}}$. Fix a dense countable subset $\{\xi_n\}_{n=1}^{\infty}$ of $H$. Given $A\in {\rm{SA}}(H)$, define $T_A:=(|A|+1)^{-2}$. 
Since $T_A$ is positive and $0$ is not an eigenvalue for $T_A$, $\nai{\xi_n}{T_A\xi_n}>0$ for every $n\in \mathbb{N}$. Moreover, $A\mapsto T_A$ is SRT-SOT continuous by functional calculus. Therefore we may define  a continuous map $\varphi\colon {\rm{SA}}(H)\to \mathbb{R}^{\mathbb{N}}$ by
\[\varphi(A):=(a_n(A))_{n=1}^{\infty},\ \ \ \ a_n(A):=\log (\nai{\xi_n}{T_A\xi_n}),\ \ \ \ A\in {\rm{SA}}(H),\ n\in \mathbb{N}.\]
We show that $\varphi$ is a reduction map. Let $A,B\in {\rm{SA}}(H)$. By the proof of Proposition \ref{prop: E_dom is F_sigma}, we have 
\eqa{
\dom{A}=\dom{B}&\Leftrightarrow \exists C_1>0\ \exists C_2>0\ [\ C_1T_B\le T_A\le C_2T_B\ ]\\
&\Leftrightarrow \exists C_1>0\ \exists C_2>0\ \forall n\in \mathbb{N}\ [\ C_1\nai{\xi_n}{T_B\xi_n}\le \nai{\xi_n}{T_A\xi_n}\le C_2\nai{\xi_n}{T_B\xi_n}\ ]\\
&\Leftrightarrow \exists C_1>0\ \exists C_2>0\ \forall n\in \mathbb{N}\ [\ \log C_1\le a_n(A)-a_n(B)\le \log C_2\ ]\\
&\Leftrightarrow \sup_{n\in \mathbb{N}}|a_n(A)-a_n(B)|<\infty\\
&\Leftrightarrow \varphi(A)E_{\ell^{\infty}}^{\mathbb{R}^{\mathbb{N}}}\varphi(B),
 }
which shows that $E_{\rm{dom}}^{{\rm{SA}}(H)}\le_c E_{\ell^{\infty}}^{\mathbb{R}^{\mathbb{N}}}$.

 Next we show that $ E_{\ell^{\infty}}^{\mathbb{R}^{\mathbb{N}}}\le_c E_{\rm{dom}}^{{\rm{SA}}(H)}$.  The proof is similar to the first part. Fix a complete orthonormal system (CONS) $\{\eta_n\}_{n=1}^{\infty}$ for $H$. 
For each $(x_n)_{n=1}^{\infty}\in \mathbb{R}^{\mathbb{N}}$, define $(\tilde{x}_n)_{n=1}^{\infty}\in \mathbb{R}_{\ge 0}^{\mathbb{N}}$ by
\[(\tilde{x}_{2n-1},\tilde{x}_{2n})=\begin{cases}(|x_n|,0) & (x_n\ge 0)\\
(0,|x_n|) & (x_n<0)\end{cases},\ \ \ \ \ n\in \mathbb{N}.\]
Thus $(1,-\frac{1}{2},4,0,\cdots)$ is mapped to $(1,0,0,\frac{1}{2},4,0,0,0,\cdots)$, etc. 
It is easy to see that $\mathbb{R}^{\mathbb{N}}\ni (x_n)_{n=1}^{\infty}\mapsto (\tilde{x}_n)\in \mathbb{R}_{\ge 0}^{\mathbb{N}}$ is an injective continuous map satisfying 
\begin{equation}
\sup_{n\in \mathbb{N}}|x_n-y_n|<\infty\Leftrightarrow \sup_{n\in \mathbb{N}}|\tilde{x}_n-\tilde{y}_n|<\infty,\ \ \ \ \ \ (x_n)_{n=1}^{\infty},(y_n)_{n=1}^{\infty}\in \mathbb{R}^{\mathbb{N}}.\label{eq: tilda map remembers the sign}
\end{equation}
We define $\psi\colon \mathbb{R}^{\mathbb{N}}\to {\rm{SA}}(H)$ by 
\[\psi(\alpha):=\sum_{n=1}^{\infty}\{\exp (\tfrac{1}{2}\tilde{x}_n)-1\}\nai{\eta_n}{\ \cdot\ }\eta_n,\ \ \ \ \alpha=(x_n)_{n=1}^{\infty}\in \mathbb{R}.\]
It is easy to see that $\psi$ is continuous, and 
\[T_{\psi(\alpha)}=(\psi(\alpha)+1)^{-2}=\sum_{n=1}^{\infty}\exp(-\tilde{x}_n)\nai{\eta_n}{\ \cdot\ }\eta_n, \ \ \ \ \alpha=(x_n)_{n=1}^{\infty}\in \mathbb{R}^{\mathbb{N}}.\] 
We show that $\psi$ is a reduction map. Given $\alpha=(x_n)_{n=1}^{\infty},\beta=(y_n)_{n=1}^{\infty}\in \mathbb{R}^{\mathbb{N}}$, we have (by (\ref{eq: tilda map remembers the sign}))
\eqa{
\dom{\psi(\alpha)}=\dom{\psi(\beta)}&\Leftrightarrow \exists C_1>0\ \exists C_2>0\ [\ C_1T_{\psi(\beta)}\le T_{\psi(\alpha)}\le C_2T_{\psi(\beta)}\ ]\\
&\Leftrightarrow \exists C_1>0\ \exists C_2>0\ \forall n\in \mathbb{N}\\
&\hspace{1.5cm}\ [\ C_1\exp(-\tilde{y}_n)\le \exp(-\tilde{x}_n)\le C_2\exp(-\tilde{y}_n)\ ]\\
&\Leftrightarrow \sup_{n\in \mathbb{N}}|\tilde{y}_n-\tilde{x}_n|<\infty\\
&\Leftrightarrow \alpha E_{\ell^{\infty}}^{\mathbb{R}^{\mathbb{N}}} \beta,
}
whence $E_{\ell^{\infty}}^{\mathbb{R}^{\mathbb{N}}}\le_c E_{\rm{dom}}^{{\rm{SA}}(H)}$. 
 This shows that $E_{\ell^{\infty}}^{\mathbb{R}^{\mathbb{N}}}$ is continuously bireducible with $E_{\rm{dom}}^{{\rm{SA}}(H)}$. 
\end{proof}
%\begin{remark}
%Note that in the proof of Theorem \ref{thm: E_dom is borel bireducible to ell^infty}, $\psi$ is actually a homeomorphism of $\mathbb{R}^{\mathbb{N}}$ onto its range, whence we have actually proved that $E_{\ell^{\infty}}^{\mathbb{R}^{\mathbb{N}}}\sqsubseteq _c E_{\rm{dom}}^{{\rm{SA}}(H)}$. On the other hand, it is not clear whether we may modify the definition of $\varphi$ to obtain a continuous embedding of $E_{\rm{dom}}^{{\rm{SA}}(H)}$ to $E_{\ell^{\infty}}^{\mathbb{R}^{\mathbb{N}}}$.
%\end{remark}
As another corollary to Theorem \ref{thm: E_dom is borel bireducible to ell^infty}, we prove that $E_{{\rm{dom}},u}^{{\rm{SA}}(H)}\le_B E_{\rm{dom}}^{{\rm{SA}}(H)}$. This is done by showing that $E_{{\rm{dom}},u}^{{\rm{SA}}(H)}$ is Borel reducible to a $K_{\sigma}$ equivalence relation. 
Regard $\mathbb{N}^*:=\mathbb{N}\cup \{\infty\}$ as a one-point compactification of $\mathbb{N}=\{1,2,\cdots\}$. 
Thus $\mathbb{N}^*$ is homeomorphic to $\{\frac{1}{n};n\in \mathbb{N}\}\cup \{0\}$ by $n\mapsto \frac{1}{n}\ (n\in \mathbb{N})$ and $\infty\mapsto 0$.  
Consider the compact Polish space $X:=\prod_{n=0}^{\infty}(\mathbb{N}^*\cup \{0\})$, and define  
$X_0:=\left \{(a_n)_{n=0}^{\infty}\in X;\ \sum_{n=0}^{\infty}a_n=\infty\right \}$. 
Then $X_0$ is a (dense) $G_{\delta}$ subspace of $X$, whence Polish. 
%The next lemma is easy to check. 
%\begin{lemma}\label{lem: X_0 is G_delta}
%$X_0$ is a dense $G_{\delta}$ subset of $X$. Therefore $X_0$ is Polish.
%\end{lemma}
%\begin{proof}
%It is straightforward to show that $X_0$ is dense. For each $n\in \mathbb{N}\cup \{0\}$, define $F_n:=\{(a_k)_{k=0}^{\infty}\in X;\ \sum_{k=0}^{\infty}a_k\le n\}$. 
%Then $F_n$ is closed in $X$. Therefore $X_0=X\setminus \bigcup_{n=0}^{\infty}F_n$ is $G_{\delta}$.
%\end{proof}
\begin{definition}
Define an equivalence relation $E_{\Sigma}$ on $X$ by 
$(a_n)_{n=0}^{\infty}E_{\Sigma}(b_n)_{n=0}^{\infty}$ if and only if there exists $k\ge 0$ such that for each $l\ge 0$ and $n\ge 0$, 
\[\sum_{i=0}^la_{n+i}\le \sum_{j=-k}^{l+k}b_{n+j}\text{\ \ and\ \ }\sum_{i=0}^lb_{n+i}\le \sum_{j=-k}^{l+k}a_{n+j}.\]
Here, we regard $a_n=b_n=0\ (n<0)$ and $\infty+n=n+\infty=\infty+\infty=\infty\ (n\in \mathbb{N})$. 
\end{definition}
\begin{proposition}\label{prop: E_dom,u is essentially K_sigma}
$E_{\Sigma}$ is a $K_{\sigma}$ equivalence relation, and $E_{{\rm{dom}},u}^{{\rm{SA}}(H)}\sim_B E_{\Sigma}|_{X_0}\ (\le_B E_{\Sigma})$. In particular, $E_{{\rm{dom}},u}^{{\rm{SA}}(H)}$ is Borel reducible to a $K_{\sigma}$ equivalence relation.  
\end{proposition}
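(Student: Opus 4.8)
The plan is to treat the three assertions in turn; the heart of the matter is a Borel map $\Phi\colon{\rm{SA}}(H)\to X_0$ that records the dimensions of the associated subspaces of $\dom{A}$ and transports $E_{{\rm{dom}},u}^{{\rm{SA}}(H)}$ to $E_{\Sigma}$ through Theorem \ref{thm: Koethe theorem}. First, $E_{\Sigma}$ is an equivalence relation: reflexivity holds with $k=0$, symmetry is immediate from the definition, and transitivity follows by taking $k=k_1+k_2$ after reindexing the nested partial sums (the convention $a_n=0$ for $n<0$ makes the boundary terms harmless). For the $K_{\sigma}$ property, for each $k\ge 0$ let $F_k\subseteq X\times X$ be the set of pairs $((a_n),(b_n))$ satisfying both displayed inequalities for that $k$ and all $n,l\ge 0$. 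Each of the countably many maps $((a_n),(b_n))\mapsto(\sum_{i=0}^{l}a_{n+i},\ \sum_{j=-k}^{l+k}b_{n+j})$ is continuous from $X\times X$ into $[0,\infty]^2$ (a finite sum, with $[0,\infty]$-valued addition continuous and $\mathbb{N}^*\cup\{0\}$ embedding topologically into $[0,\infty]$), and $\{(x,y):x\le y\}$ is closed in $[0,\infty]^2$; hence each $F_k$ is closed, so compact because $X\times X$ is compact, and $E_{\Sigma}=\bigcup_{k\ge 0}F_k$ is $K_{\sigma}$.

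\textbf{Reduction $E_{{\rm{dom}},u}^{{\rm{SA}}(H)}\le_B E_{\Sigma}|_{X_0}$.} For $A\in{\rm{SA}}(H)$ put $T_A:=(|A|+1)^{-1}$; it is self-adjoint with $0\le T_A\le 1$, injective, and $\text{Ran}(T_A)=\dom{A}$, so $\dom{A}$ is a dense operator range whose associated subspaces are $H_n(A):=E_{T_A}((2^{-n-1},2^{-n}])H$. Set $a_n(A):=\dim H_n(A)\in\mathbb{N}^*\cup\{0\}$ and $\Phi(A):=(a_n(A))_{n\ge 0}$; since $H=\bigoplus_{n}H_n(A)$ is infinite dimensional, $\sum_{n}a_n(A)=\infty$, so $\Phi(A)\in X_0$. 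By Theorem \ref{thm: Koethe theorem} applied to the dense operator ranges $\dom{A},\dom{B}$, together with $\dim(H_n\oplus\cdots\oplus H_{n+l})=\sum_{i=0}^{l}\dim H_{n+i}$ in cardinal arithmetic, one gets $A\,E_{{\rm{dom}},u}^{{\rm{SA}}(H)}\,B\iff\Phi(A)\,E_{\Sigma}\,\Phi(B)$, so $\Phi$ is a reduction; it remains to check it is Borel. Fix a countable dense set $\{\xi_j\}_{j\ge 1}\subseteq H$. For fixed $v\in H$ and $c\in\mathbb{R}$, the function $A\mapsto\|E_{T_A}((c,\infty))v\|^2$ is the supremum over the countable family of continuous functions $A\mapsto\nai{v}{f_k(T_A)v}$, where $f_k\colon\mathbb{R}\to[0,1]$ is continuous, vanishes on $(-\infty,c]$ and equals $1$ on $[c+1/k,\infty)$; hence it is lower semicontinuous (here we use that $A\mapsto g(T_A)$ is SRT--SOT continuous for bounded continuous $g$, by functional calculus). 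Consequently $A\mapsto\|E_{T_A}((a,b])v\|^2=\|E_{T_A}((a,\infty))v\|^2-\|E_{T_A}((b,\infty))v\|^2$ is Borel, and by polarization so is $A\mapsto\nai{\xi_r}{E_{T_A}((a,b])\xi_s}$. As $\{E_{T_A}((2^{-n-1},2^{-n}])\xi_j\}_j$ is dense in $H_n(A)$, we have $a_n(A)\ge m$ iff the Gram determinant $\det[\nai{\xi_{j_r}}{E_{T_A}((2^{-n-1},2^{-n}])\xi_{j_s}}]_{r,s=1}^{m}$ is nonzero for some $j_1<\cdots<j_m$; thus each $\{A:a_n(A)\ge m\}$ is Borel, and $\Phi$ is Borel.

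\textbf{Reduction $E_{\Sigma}|_{X_0}\le_B E_{{\rm{dom}},u}^{{\rm{SA}}(H)}$.} Fix a CONS $\{\eta_j\}_{j\ge 1}$ of $H$. A routine bookkeeping argument produces a Borel map $X_0\ni(a_n)\mapsto(m_j)_{j\ge 1}\in\{0,1,2,\dots\}^{\mathbb{N}}$ with $|\{j:m_j=m\}|=a_m$ for every $m\ge 0$; the hypothesis $\sum_n a_n=\infty$ is used exactly to guarantee that the index set $\{j\ge 1\}$ is exhausted. Put $\psi((a_n)):=\sum_j(2^{m_j}-1)\nai{\eta_j}{\ \cdot\ }\eta_j\in{\rm{SA}}(H)$, a positive self-adjoint operator; $\psi$ is Borel (in fact SRT-continuous in $(m_j)$), and $T_{\psi((a_n))}=\sum_j 2^{-m_j}\nai{\eta_j}{\ \cdot\ }\eta_j$, so $H_m(\psi((a_n)))=\overline{\text{span}}\{\eta_j:m_j=m\}$ has dimension $a_m$. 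Therefore $\Phi\circ\psi=\mathrm{id}_{X_0}$, whence (by the previous paragraph) $\psi(\alpha)\,E_{{\rm{dom}},u}^{{\rm{SA}}(H)}\,\psi(\beta)\iff\Phi(\psi(\alpha))\,E_{\Sigma}\,\Phi(\psi(\beta))\iff\alpha\,E_{\Sigma}\,\beta$, so $\psi$ is a reduction.

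\textbf{Conclusion and main obstacle.} The two reductions give $E_{{\rm{dom}},u}^{{\rm{SA}}(H)}\sim_B E_{\Sigma}|_{X_0}$, and since $X_0$ is Polish and its inclusion into $X$ is a continuous reduction of $E_{\Sigma}|_{X_0}$ into the $K_{\sigma}$ relation $E_{\Sigma}$, the last assertion of the proposition follows. The main obstacle, as the sketch indicates, is the Borel measurability of $A\mapsto a_n(A)=\dim H_n(A)$: the spectral intervals $(2^{-n-1},2^{-n}]$ are half-open, so their spectral projections depend discontinuously on $A$ through the point spectrum of $T_A$ and their ranks are not even semicontinuous; one is therefore forced to recover $a_n(A)$ indirectly, as a Borel combination of the lower-semicontinuous functionals $A\mapsto\|E_{T_A}((c,\infty))v\|^2$ via Gram determinants, as above. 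The remaining pieces — that $E_{\Sigma}$ is an equivalence relation, the enumeration map of the third paragraph, and that $\Phi,\psi$ are genuine reductions — are routine once Theorem \ref{thm: Koethe theorem} is available.
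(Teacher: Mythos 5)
Your proposal is correct, and its overall skeleton coincides with the paper's: both proofs establish the $K_{\sigma}$ property by writing $E_{\Sigma}$ as a countable union of closed subsets of the compact space $X\times X$, both reduce $E_{{\rm{dom}},u}^{{\rm{SA}}(H)}$ to $E_{\Sigma}|_{X_0}$ by sending $A$ to the dimension sequence of the associated subspaces of the dense operator range $\dom{A}={\rm{Ran}}((|A|+1)^{-1})$ and invoking Theorem \ref{thm: Koethe theorem}, and both come back by realizing an arbitrary $\alpha\in X_0$ as the dimension sequence of a diagonal operator. Where you genuinely diverge is in the two technical verifications. (i) For the Borel measurability of $A\mapsto \dim E_{T_A}((2^{-n-1},2^{-n}])H$, the paper translates these dimensions into ranks of spectral projections of $A$ itself over the intervals $I_n,J_n$ and appeals to Lemma \ref{lem: rank function is Borel}, whose proof is imported from \cite[Proposition 3.18]{AM14}; you instead prove measurability from scratch, via lower semicontinuity of $A\mapsto\|E_{T_A}((c,\infty))v\|^2$ (supremum of continuous functionals from functional calculus), differences and polarization to get the matrix entries $\nai{\xi_r}{E_{T_A}((a,b])\xi_s}$, and Gram determinants to detect $\dim\ge m$. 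This is a valid, self-contained alternative that avoids any reliance on the earlier paper. (ii) For $E_{\Sigma}|_{X_0}\le_B E_{{\rm{dom}},u}^{{\rm{SA}}(H)}$, the paper partitions $X_0$ into the Borel pieces $X_{0,k}$ according to the number of infinite coordinates and builds a separate continuous map $\psi_k$ on each piece; you build a single uniform map via an enumeration $(a_n)\mapsto(m_j)$ in which each value $m$ occurs with multiplicity $a_m$, and then exploit $\Phi\circ\psi=\mathrm{id}_{X_0}$. This is cleaner, but it is the one place where you wave your hands: to make it airtight, fix a bijective enumeration of $(\mathbb{N}\cup\{0\})\times\mathbb{N}$ and let $m_j$ be the first coordinate of the $j$-th pair $(m,i)$ satisfying $i\le a_m$; since each condition $\{a_m\ge i\}$ is clopen in $\mathbb{N}^*\cup\{0\}$ and $\sum_n a_n=\infty$ guarantees infinitely many admissible pairs, this map is Borel (indeed continuous) on $X_0$ and has the required multiplicities, including when some $a_m=\infty$. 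With that spelled out, your argument is complete; you also verify explicitly that $E_{\Sigma}$ is an equivalence relation (transitivity with $k=k_1+k_2$), which the paper leaves implicit.
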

%\begin{remark}With a little extra work, it can be shown that $E_{{\rm{dom}},u}^{{\rm{SA}}(H)}\sim_B E_{\Sigma}|_{X_0}$.
%\end{remark}
We omit the proof of the next easy lemma.  
\begin{lemma}\label{lem: sum is continuous}
For $n,m\in \mathbb{N}\cup \{0\} (n\le m)$, the map $X\ni (a_k)_{k=0}^{\infty}\mapsto \sum_{k=n}^ma_k\in \mathbb{N}^*$ is continuous.
\end{lemma}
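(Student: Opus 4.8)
\textbf{Proof proposal for Lemma \ref{lem: sum is continuous}.}

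The plan is to reduce the claim to the case of a single coordinate projection and to continuity of addition on $\mathbb{N}^*$. First I would recall the topology on $\mathbb{N}^*=\mathbb{N}\cup\{\infty\}$: it is the one-point compactification of the discrete space $\mathbb{N}$, so a basic open set is either a singleton $\{j\}$ with $j\in\mathbb{N}$, or a set of the form $\{\infty\}\cup\{j\in\mathbb{N};\ j\ge N\}$ for some $N\in\mathbb{N}$. The space $X=\prod_{n=0}^\infty(\mathbb{N}^*\cup\{0\})$ carries the product topology, so each coordinate projection $\pi_k\colon X\to \mathbb{N}^*\cup\{0\}$ is continuous by definition. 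Here $\mathbb{N}^*\cup\{0\}$ should be understood as $\{0,1,2,\dots\}\cup\{\infty\}$, the one-point compactification of $\mathbb{N}\cup\{0\}$, and addition $+\colon(\mathbb{N}^*\cup\{0\})^2\to\mathbb{N}^*\cup\{0\}$ (with the convention $\infty+n=n+\infty=\infty+\infty=\infty$) is continuous: this is the routine verification that I would only sketch, checking that the preimage of each of the two types of basic open set is open, and the only slightly delicate point is that the preimage of $\{\infty\}\cup\{j\ge N\}$ under $+$ is $\bigcup_{p+q\ge N}(U_p\times U_q)$ where $U_p$ is the appropriate cofinite-or-singleton open set, which is manifestly open; the key fact making this work is that $p+q\ge N$ as soon as either summand is large.

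Next, since $n\le m$ are fixed, the map in question is the finite composite
\[
X\xrightarrow{(\pi_n,\pi_{n+1},\dots,\pi_m)} (\mathbb{N}^*\cup\{0\})^{m-n+1}\xrightarrow{\ \text{iterated }+\ }\mathbb{N}^*\cup\{0\},
\]
and I would argue that the image actually lands in $\mathbb{N}^*$ (not just $\mathbb{N}^*\cup\{0\}$): any finite sum of elements of $\mathbb{N}^*\cup\{0\}$ is $0$, a positive integer, or $\infty$, hence lies in $\mathbb{N}^*$ once we regard $0$ as an element of $\mathbb{N}^*$ as well — or, if the paper insists that $\mathbb{N}=\{1,2,\dots\}$ and $\mathbb{N}^*$ excludes $0$, one simply observes that the codomain restriction is harmless because $\mathbb{N}^*\cup\{0\}$ and $\mathbb{N}^*$ carry the same subspace-compatible topologies near the relevant points and the map's continuity into $\mathbb{N}^*\cup\{0\}$ implies continuity into $\mathbb{N}^*$ after noting $0$ is isolated. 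The tuple map $(\pi_n,\dots,\pi_m)$ is continuous because each component is, and the iterated addition map is continuous by the previous paragraph together with the fact that composition of finitely many continuous maps is continuous. Composing, the map $(a_k)_{k=0}^\infty\mapsto\sum_{k=n}^m a_k$ is continuous.

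I do not expect any genuine obstacle here; the statement is elementary and the paper itself flags it as "easy." The only point requiring a moment's care is the continuity of addition on the compactified semigroup $\mathbb{N}^*\cup\{0\}$ at pairs involving $\infty$ — one must check that largeness of a partial sum is an open condition, which it is precisely because "$p+q\ge N$" is implied by "$p\ge N$" and is stable under increasing either coordinate. Everything else is a formal consequence of the universal property of the product topology and finiteness of the index set $\{n,n+1,\dots,m\}$.
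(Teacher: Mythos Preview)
Your argument is correct. The paper itself omits the proof entirely (``We omit the proof of the next easy lemma''), so there is nothing to compare against; your decomposition into coordinate projections followed by iterated addition on the compactified semigroup is exactly the natural way to fill in the details. One small remark: you are right to flag the codomain issue --- since $\mathbb{N}^*=\{1,2,\dots,\infty\}$ in the paper's convention, the sum can equal $0$ when all the $a_k$ vanish, so strictly speaking the map lands in $\mathbb{N}^*\cup\{0\}$; this is a harmless slip in the paper's statement and does not affect how the lemma is used, since the only application is to check that sets of the form $\{\sum a_{n+i}\le \sum b_{n+j}\}$ are closed.
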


\begin{lemma}\label{lem: rank function is Borel}
Let $a,b\in \mathbb{R}, a<b$, and let $I=(a,b), [a,b)$ or $(a,b]$. Then the map ${\rm{SA}}(H)\ni A\mapsto {\rm{rank}}(E_A(I))\in \mathbb{N}^*$ is Borel. 
\end{lemma}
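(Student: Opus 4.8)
The plan is to factor $A\mapsto{\rm rank}(E_A(I))$ through the spectral projection $E_A(I)$ and to handle the two pieces separately. First I would prove that for every $\xi\in H$ the function ${\rm SA}(H)\ni A\mapsto\langle\xi,E_A(I)\xi\rangle\in[0,\infty)$ is Borel (in fact of Baire class $1$); then I would exhibit, for each $k\ge 1$, the set $\{A:{\rm rank}(E_A(I))\ge k\}$ as an explicitly Borel combination of such coefficient functions. Granting these, $\{A:{\rm rank}(E_A(I))=k\}$ is a Boolean combination of the sets $\{{\rm rank}\ge k\}$ and $\{{\rm rank}\ge k+1\}$ (with $\{{\rm rank}=0\}$ the complement of $\{{\rm rank}\ge 1\}$), and $\{A:{\rm rank}(E_A(I))=\infty\}=\bigcap_{k\ge 1}\{A:{\rm rank}(E_A(I))\ge k\}$, so $A\mapsto{\rm rank}(E_A(I))$ is Borel, which is the assertion.

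For the first piece I would use the standard fact that $A\mapsto f(A)$ is SRT-to-SOT continuous for every $f\in C_0(\mathbb R)$: this holds for $f(x)=(x\pm i)^{-1}$ essentially by the definition of the SRT, and the set of $f\in C_b(\mathbb R)$ for which it holds is a uniformly closed $*$-subalgebra (products and adjoints of uniformly bounded, strongly convergent nets of normal operators are well behaved), hence contains $C_0(\mathbb R)$ by the Stone--Weierstrass theorem; see also \cite[$\S$VIII.7]{ReedSimonI}. Since $I$ is a bounded interval I can pick $f_m\in C_c(\mathbb R)$ with $0\le f_m\le 1$ and $f_m(x)\to\chi_I(x)$ for every $x\in\mathbb R$ (piecewise-linear bumps work for each of the three interval types). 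Then for fixed $A$ and $\xi$, the spectral theorem together with dominated convergence against the finite measure $\langle\xi,E_A(\cdot)\xi\rangle$ gives $\|f_m(A)\xi-E_A(I)\xi\|^2=\int|f_m-\chi_I|^2\,d\langle\xi,E_A(\cdot)\xi\rangle\to 0$, so $A\mapsto\langle\xi,E_A(I)\xi\rangle$ is the pointwise limit of the SRT-continuous functions $A\mapsto\langle\xi,f_m(A)\xi\rangle$, hence Borel. (By polarization, all coefficient maps $A\mapsto\langle\eta,E_A(I)\xi\rangle$ are Borel as well.)

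For the second piece I would fix a countable dense set $D\subset H$ and prove the following equivalence: ${\rm rank}(E_A(I))\ge k$ holds if and only if for every $m\in\mathbb N$ there exist $\xi_1,\dots,\xi_k\in D$ such that $\|E_A(I)\xi_j-\xi_j\|<1/m$ for $1\le j\le k$ and $|\langle\xi_i,\xi_j\rangle-\delta_{ij}|<1/m$ for $1\le i,j\le k$. The forward direction comes from approximating, for each $m$, an orthonormal $k$-tuple in ${\rm Ran}(E_A(I))$ by vectors of $D$, using that $E_A(I)$ is a contraction fixing the tuple; the reverse direction holds because for $m$ large the vectors $E_A(I)\xi_1,\dots,E_A(I)\xi_k\in{\rm Ran}(E_A(I))$ have Gram matrix $(\langle E_A(I)\xi_i,E_A(I)\xi_j\rangle)_{i,j}=(\langle\xi_i,E_A(I)\xi_j\rangle)_{i,j}$ within $1/2$ of the $k\times k$ identity, hence are linearly independent. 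Since $\|E_A(I)\xi-\xi\|^2=\|\xi\|^2-\langle\xi,E_A(I)\xi\rangle$ depends on $A$ in a Borel way by the first piece, while the conditions on the numbers $\langle\xi_i,\xi_j\rangle$ do not involve $A$, the right-hand side of the equivalence is a countable union (over finite tuples from $D$) of countable intersections (over $m$) of Borel subsets of ${\rm SA}(H)$, hence Borel.

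The only genuine obstacle I anticipate is that the problem has discontinuity built in: $A\mapsto E_A(I)$ jumps precisely when an endpoint of $I$ crosses an eigenvalue of $A$, and the rank of a projection is not SOT-continuous (rank-one projections can converge strongly to $0$), so no naive continuity argument can work. The real point is therefore to trade continuity for the two weaker descriptions above --- ``pointwise limit of continuous functions'' for the coefficients and ``$\forall m\,\exists$ finite tuple from $D$'' for the rank inequality; once those reformulations are in place, the remaining verifications (including the elementary Gram-matrix estimate) are routine and need nothing beyond the spectral theorem, the definition of the SRT, and Stone--Weierstrass.
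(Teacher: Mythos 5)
Your proof is correct, but it follows a genuinely different route from the paper's. The paper treats the case $I=[a,b)$ and argues that the sublevel sets $S_n=\{A\in{\rm{SA}}(H):{\rm{rank}}(E_A(I))\le n\}$ are SRT-closed, by transporting the argument of \cite[Proposition 3.18]{AM14}, and then obtains the level sets $\{{\rm{rank}}=n\}$ and $\{{\rm{rank}}=\infty\}$ as Boolean/countable combinations. You instead make the lemma self-contained: the coefficient maps $A\mapsto\langle\xi,E_A(I)\xi\rangle$ are Baire class $1$ (pointwise limits of the SRT-continuous maps $A\mapsto\langle\xi,f_m(A)\xi\rangle$ with $f_m\in C_c(\mathbb{R})$, $f_m\to\chi_I$ pointwise), and $\{A:{\rm{rank}}(E_A(I))\ge k\}$ is characterized by the countable criterion ``for every $m$ there is a $k$-tuple from $D$ that is $1/m$-almost orthonormal and $1/m$-almost fixed by $E_A(I)$,'' justified in one direction by approximating an orthonormal tuple in the range and in the other by the Gram-matrix perturbation estimate; together with the identity $\|E_A(I)\xi-\xi\|^2=\|\xi\|^2-\langle\xi,E_A(I)\xi\rangle$ this makes each superlevel set Borel. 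Two small remarks: the set your criterion defines is a countable \emph{intersection} over $m$ of countable \emph{unions} over tuples (you stated the two operations in the opposite order, though either arrangement is Borel, and your displayed equivalence has the quantifiers in the right order); and your route has the genuine advantage of treating all three interval types uniformly and of being insensitive to endpoint phenomena, which is exactly where the closedness claim is delicate --- for $I=[a,b)$ the scalar operators $(a-\tfrac1j)1_H\to a\,1_H$ in SRT have ${\rm{rank}}(E_{(a-1/j)1_H}(I))=0$ while the limit has infinite rank, so for half-open intervals one should not expect the sublevel sets to be literally SRT-closed, and a careful reader of the paper's one-line reduction to \cite{AM14} must account for this. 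What the paper's approach buys, where it applies (e.g.\ open intervals), is the stronger statement that the rank map is of low Baire class via closed sublevel sets; but for the application in Proposition \ref{prop: E_dom,u is essentially K_sigma} plain Borelness suffices, so your argument fully proves the lemma.
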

\begin{proof}
We show the case of $I=[a,b)$. Let $S_n:=\{A\in {\rm{SA}}(H);\ \text{rank}(E_A([a,b)))\le n\}\ (n\in \mathbb{N}\cup \{0\})$, $S_{\infty}:=\{A\in {\rm{SA}}(H);\ \text{rank}(E_A([a,b)))=\infty\}$. Then 
by a similar argument to the proof of  \cite[Proposition 3.18]{AM14} (especially that $S_{n,k}$ defined there is SRT-closed), it can be shown that $S_n$ is SRT-closed. 
Therefore $\{A\in {\rm{SA}}(H); \text{rank}(E_A([a,b)))=n\}=S_n\setminus S_{n-1}\ (n\ge 1)$ and $S_0$ are Borel. Then $S_{\infty}={\rm{SA}}(H)\setminus \bigcup_{n\ge 0}S_n$ is Borel too. Thus the map $A\mapsto {\rm{rank}}(E_A(I))$ is Borel.
\end{proof}

\begin{proof}[Proof of Proposition \ref{prop: E_dom,u is essentially K_sigma}]
It is easy to see that $\dom{A}=\dom{|A|+1}$ for every $A\in {\rm{SA}}(H)$, and $\dom{A}={\rm{Ran}}((|A|+1)^{-1})$. The associated subspaces for $T_A=(|A|+1)^{-1}$ are 
\[H_n(T_A)=E_{T_A}((2^{-n-1},2^n])H,\ \ \ \ n\ge 0.\]
Note that for $\lambda \in \sigma(A)$, 
\[(|\lambda|+1)^{-1}\in (2^{-n-1},2^n]\Leftrightarrow \lambda \in \underbrace{(1-2^{n+1},1-2^n]\cup [2^n-1,2^{n+1}-1)}_{=:I_n\cup J_n}.\]
Let $d_0(A):=\text{rank}(E_A(-1,1))$ and $d_n(A):=\dim H_n(T_A)=\text{rank}(E_A(I_n))+\text{rank}(E_A(J_n))\ (n\ge 1)$.\ %,\ \ \ \ \ n\ge 1.\]
By Lemma \ref{lem: rank function is Borel}, $d_n\colon {\rm{SA}}(H)\to \mathbb{N}^*$ is Borel for each $n\ge 0$.

Now, note that $E_{\Sigma}=\bigcup_{k=0}^{\infty}E_k$, where
\[E_k:=\bigcap_{l,n=0}^{\infty}\left \{((a_n)_{n=0}^{\infty},(b_n)_{n=0}^{\infty});\ \sum_{i=0}^la_{n+i}\le \sum_{j=-k}^{l+k}b_{n+j}\text{\ \ and\ \ }\sum_{i=0}^lb_{n+i}\le \sum_{j=-k}^{l+k}a_{n+j}\right \}.\]
It is immediate to see that $E_{\Sigma}$ is $K_{\sigma}$ because each $E_k$ is a closed subset of the compact space $X\times X$ by Lemma \ref{lem: sum is continuous}. 
%For $n\in \mathbb{N}$, let $I_n:=(1-2^{n+1},1-2^n],\ J_n:=[2^n-1,2^{n+1}-1)$, and 
Define a Borel map $\varphi\colon {\rm{SA}}(H)\to X_0$ by $\varphi(A):=(d_n(A))_{n=0}^{\infty}$.
Since $H$ is infinite-dimensional, $\varphi(A)\in X_0$. 
Moreover, $AE_{{\rm{dom}},u}^{{\rm{SA}}(H)}B$ if and only if $\varphi(A)E_{\Sigma}\varphi(B)$ by Theorem \ref{thm: Koethe theorem}. Therefore $E_{{\rm{dom}},u}^{{\rm{SA}}(H)}\le_B E_{\Sigma}|_{X_0}\le_BE_{\Sigma}$.
To show $E_{\Sigma}|_{X_0}\le_B E_{{\rm{dom}},u}^{{\rm{SA}}(H)}$, let 
\[X_{0,k}:=\left \{(a_n)_{n=0}^{\infty}\in X_0; \sharp \{n\in \mathbb{N}\cup \{0\};a_n=\infty\}=k\right \},\ \ \ \ \ k\in \mathbb{N}^*\cup \{0\}.\]
Note that each $X_{0,k}$ is a Borel subset of $X_0$: it is enough to see that $\widetilde{X}_{0,k}:=\bigcup_{i=0}^{k}X_{0,i}$ is closed in $X$. But if $\alpha_i=(a_{n,i})_{n=0}^{\infty}\in \widetilde{X}_{0,k}$ tends to $\alpha=(a_n)_{n=0}^{\infty}\in X_0$, then if $a_{n_1}=\cdots=a_{n_p}=\infty\ (n_1<n_2<\cdots<n_p)$, then by assumption there exists $i_0$ such that for each $i\ge i_0$ $a_{i,n_1}=\cdots=a_{i,n_p}=\infty$, so $p\le k$. Therefore $\alpha\in \widetilde{X}_{0,k}$, and $\widetilde{X}_{0,k}$ is closed.

Now define for each $k\in \mathbb{N}^*\cup \{0\}$ a Borel map $\psi_k\colon X_{0,k}\to {\rm{SA}}(H)$ by the following:\\ 
\textbf{Case $k=0$}.\\
Fix a CONS $\{\xi_n\}_{n=1}^{\infty}$ for $H$. For $\alpha=(a_n)_{n=0}^{\infty}\in X_{0,0}$, define 
\[\psi_0(\alpha):=\sum_{n=0}^{\infty}(2^{\frac{n}{2}}-1)e_n(\alpha),\]
where the projection $e_{n,0}(\alpha)$ is inductively defined as follows: $e_{0,0}(\alpha)$ is the projection onto $\text{span}\{\xi_1,\cdots,\xi_{a_0}\}$ (if $a_0\ge 1$) and $e_{0,0}(\alpha)=0$ otherwise, and for $k\ge 0$,
\[e_{k+1,0}(\alpha):=\text{projection onto span}\{\xi_{a_0+\cdots +a_k+1},\cdots \xi_{a_0+\cdots+a_k+a_{k+1}}\}\text{\ \ if\ \ }a_{k+1}\ge 1,\]
and $e_{k+1,0}(\alpha):=0$ otherwise. Then it is easy to see that $\psi_0\colon X_{0,0}\to {\rm{SA}}(H)$ is continuous, and $T_{\psi_0(\alpha)}=\sum_{n=0}^{\infty}2^{-n}e_{n,0}(\alpha)$. In particular, the rank of the associated subspace for $T_{\psi_0(\alpha)}$ is $d_n(\psi_0(\alpha))=a_n\ (n\ge 0)$.\\ \\
\textbf{Case $1\le k\le \infty$}.\\
Let $\alpha=(a_n)_{n=0}^{\infty}\in X_{0,k}$, and suppose that $a_{n_1}=\cdots=a_{n_k}=\infty\ (n_1<\cdots<n_k)$ (for $k=\infty$ case this means that $n_1<n_2<\cdots$ is an infinite sequence) and $a_n<\infty\ (n\notin \{n_1,\cdots,n_k\})$. 
Fix another CONS $\{\eta_n,\zeta_{p,n};n\ge 1,1\le p\le k\}$ for $H$, and define $\psi_k(\alpha)\in {\rm{SA}}(H)$ by
\[\psi_k(\alpha):=\sum_{n=0}^{\infty}(2^{\frac{n}{2}}-1)e_{n,k}(\alpha),\]
where the projection $e_{n,k}(\alpha)$ is defined as follows: define $(b_n)_{n=0}^{\infty}\in X_0$ inductively by 
\[b_0:=\begin{cases}\ a_0 & (a_0<\infty)\\
\ 0 & (a_0=\infty)\end{cases},\ \ \  b_{k+1}:=\begin{cases}b_k+a_{k+1} & (a_{k+1}<\infty)\\
\ \ \ \ b_k & (a_{k+1}=\infty)\end{cases},\ \ \ \ k\ge 0,\]
and then put $e_{0,k}(\alpha)=\text{projection onto span}\{\eta_{1},\cdots,\eta_{b_0}\}$ if $a_0<\infty$, and
 $e_{0,k}(\alpha):=\text{projection onto}\ \overline{\text{span}}\{\zeta_{1,i}\}_{i=1}^{\infty}$ if $a_0=\infty$. For $n\ge 1$, put 
\[
e_{n,k}(\alpha):=\begin{cases}\ \ \ \ \ \ \ \ \ 0 & (a_n=0)\\
\text{projection onto span}\{\eta_{b_{n-1}+1},\cdots,\eta_{b_n}\} & (0<a_{n}<\infty)\\
\text{projection onto}\ \overline{\text{span}}\{\zeta_{p,i}\}_{i=1}^{\infty} & (n=n_p)
\end{cases}.
\]
Again $\psi_k\colon X_{0,k}\to {\rm{SA}}(H)$ is continuous, and $d_n(\psi_k(\alpha))=a_n\ (n\ge 0)$. 

 Finally define $\psi\colon X_0\to {\rm{SA}}(H)$ by $\psi|_{X_{0,k}}:=\psi_k$. Then since each $X_{0,k}$ is Borel and $\psi_k$ is continuous on $X_{0,k}$, $\psi$ is Borel. Moreover, since $d_n(\psi(\alpha))=a_n (n\ge 0)$ for every $\alpha=(a_n)_{n=0}^{\infty}\in X_0$, it follows that $\alpha E_{\Sigma}\beta\Leftrightarrow \psi(\alpha)E_{{\rm{dom}},u}^{{\rm{SA}}(H)}\psi(\beta)$ for $\alpha,\beta\in X_0$. 
This shows that $E_{\Sigma}|_{X_0}\le_B E_{{\rm{dom}},u}^{{\rm{SA}}(H)}$. Therefore $E_{\Sigma}|_{X_0}\sim_B E_{{\rm{dom}},u}^{{\rm{SA}}(H)}$ holds.     
\end{proof}
\begin{corollary}\label{cor: E_dom,u is reducible to E_dom}
$E_{{\rm{dom}},u}^{{\rm{SA}}(H)}\le_B E_{\rm{dom}}^{{\rm{SA}}(H)}$ holds.
\end{corollary}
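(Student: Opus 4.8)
The plan is to deduce Corollary~\ref{cor: E_dom,u is reducible to E_dom} directly by chaining the two reductions already established. By Proposition~\ref{prop: E_dom,u is essentially K_sigma} we have $E_{{\rm{dom}},u}^{{\rm{SA}}(H)}\le_B E_{\Sigma}$, and $E_{\Sigma}$ is a $K_{\sigma}$ equivalence relation on the Polish space $X\times X$. By Rosendal's Theorem~\ref{thm: Rosendal theorem}, every $K_{\sigma}$ equivalence relation is Borel reducible to $E_{\ell^{\infty}}^{\mathbb{R}^{\mathbb{N}}}$, so $E_{\Sigma}\le_B E_{\ell^{\infty}}^{\mathbb{R}^{\mathbb{N}}}$. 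Finally, Theorem~\ref{thm: E_dom is borel bireducible to ell^infty} gives $E_{\ell^{\infty}}^{\mathbb{R}^{\mathbb{N}}}\le_c E_{\rm{dom}}^{{\rm{SA}}(H)}$, in particular $E_{\ell^{\infty}}^{\mathbb{R}^{\mathbb{N}}}\le_B E_{\rm{dom}}^{{\rm{SA}}(H)}$. Composing these three Borel reductions yields $E_{{\rm{dom}},u}^{{\rm{SA}}(H)}\le_B E_{\rm{dom}}^{{\rm{SA}}(H)}$.

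The only point needing a word of justification is that Borel reducibility is transitive: if $f\colon X\to Y$ is a Borel reduction of $E$ to $F$ and $g\colon Y\to Z$ is a Borel reduction of $F$ to $G$, then $g\circ f\colon X\to Z$ is Borel (composition of Borel maps) and satisfies $x\,E\,y \Leftrightarrow f(x)\,F\,f(y) \Leftrightarrow g(f(x))\,G\,g(f(y))$, hence is a reduction of $E$ to $G$. This is standard, so I would only mention it in passing. I do not expect any genuine obstacle here: the corollary is a formal consequence of the preceding results, and the entire proof is essentially a one-line composition, with the substantive content having been carried out in Proposition~\ref{prop: E_dom,u is essentially K_sigma} (the K\"othe--Fillmore--Williams encoding) and in Theorem~\ref{thm: E_dom is borel bireducible to ell^infty} (the operator-range construction via Douglas' theorem).

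\begin{proof}[Proof of Corollary \ref{cor: E_dom,u is reducible to E_dom}]
By Proposition \ref{prop: E_dom,u is essentially K_sigma}, $E_{{\rm{dom}},u}^{{\rm{SA}}(H)}\le_B E_{\Sigma}$, and $E_{\Sigma}$ is a $K_{\sigma}$ equivalence relation. By Rosendal's Theorem \ref{thm: Rosendal theorem}, $E_{\Sigma}\le_B E_{\ell^{\infty}}^{\mathbb{R}^{\mathbb{N}}}$. By Theorem \ref{thm: E_dom is borel bireducible to ell^infty}, $E_{\ell^{\infty}}^{\mathbb{R}^{\mathbb{N}}}\le_c E_{\rm{dom}}^{{\rm{SA}}(H)}$, in particular $E_{\ell^{\infty}}^{\mathbb{R}^{\mathbb{N}}}\le_B E_{\rm{dom}}^{{\rm{SA}}(H)}$. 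Since Borel reducibility is transitive (the composition of Borel reductions is again a Borel reduction), we conclude that $E_{{\rm{dom}},u}^{{\rm{SA}}(H)}\le_B E_{\rm{dom}}^{{\rm{SA}}(H)}$.
\end{proof}
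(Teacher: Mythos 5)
Your proposal is correct and follows exactly the paper's own argument: the paper's proof likewise chains Proposition \ref{prop: E_dom,u is essentially K_sigma}, Rosendal's Theorem \ref{thm: Rosendal theorem}, and Theorem \ref{thm: E_dom is borel bireducible to ell^infty} to get $E_{{\rm{dom}},u}^{{\rm{SA}}(H)}\le_B E_{\ell^{\infty}}^{\mathbb{R}^{\mathbb{N}}}\sim_c E_{\rm{dom}}^{{\rm{SA}}(H)}$. Your only addition is spelling out the transitivity of Borel reducibility, which the paper leaves implicit.
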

\begin{proof}By Proposition \ref{prop: E_dom,u is essentially K_sigma}, Theorems \ref{thm: E_dom is borel bireducible to ell^infty} and \ref{thm: Rosendal theorem}, it holds that $E_{{\rm{dom}},u}^{{\rm{SA}}(H)}\le_BE_{\ell^{\infty}}^{\mathbb{R}^{\mathbb{N}}}\sim_cE_{\rm{dom}}^{{\rm{SA}}(H)}$.
\end{proof}
\begin{remark}It is not clear whether $E_{{\rm{dom}}}^{{\rm{SA}}(H)}\le_B E_{{\rm{dom}},u}^{{\rm{SA}}(H)}$ holds. 
\end{remark}
\section{Generic $A$ has purely singular continuous spectrum $\mathbb{R}$}
In \cite[Theorem 3.17 (1)]{AM14}, we have shown a genericity result that the set $\{A\in {\rm{SA}}(H);\sigma_{\rm{ess}}(A)=\mathbb{R}\}$ is dense $G_{\delta}$ in ${\rm{SA}}(H)$. In this last section, we show that generic self-adjoint operators in fact have much more pathological spectral property:  
\begin{theorem}\label{thm: Wonderland for SA(H)}
The set $\mathcal{G}:=\{A\in {\rm{SA}}(H); \sigma_{\rm{p}}(A)=\sigma_{\rm{ac}}(A)=\emptyset,\ \sigma_{\rm{sc}}(A)=\mathbb{R}\}$ is dense $G_{\delta}$ in ${\rm{SA}}(H)$. 
\end{theorem}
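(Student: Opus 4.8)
The plan is to deduce Theorem \ref{thm: Wonderland for SA(H)} from Simon's Wonderland Theorem \cite{Simon95}, which (in the form convenient here) says that in a suitable complete metric space of self-adjoint operators, the set of operators with purely singular continuous spectrum is a dense $G_\delta$, provided one can exhibit a dense set of operators with purely absolutely continuous spectrum and a dense set of operators with pure point spectrum. Actually the cleaner route, and the one I would take, is to combine genericity statements piecewise: the key observation is that for a fixed open interval $I$ with rational endpoints, each of the three conditions ``$\operatorname{rank}E_A(I)\ne 0$ forces a point mass in $I$'', ``$A$ has an a.c.\ component in $I$'', and ``$A$ has an s.c.\ component in $I$'' behaves well under the relevant topology, and Wonderland-type arguments show that $\{A : \sigma_{\mathrm{ac}}(A)\cap I=\emptyset\}$, $\{A : \sigma_{\mathrm p}(A)\cap I=\emptyset\}$, and $\{A : \sigma_{\mathrm{sc}}(A)\cap I\ne\emptyset\}$ are each $G_\delta$ and (by approximation) dense in ${\rm SA}(H)$ with the SRT. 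Intersecting over the countably many rational intervals $I$ and invoking the Baire category theorem in the Polish space $({\rm SA}(H),\mathrm{SRT})$ then yields that $\mathcal G$ is a dense $G_\delta$.

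Concretely, the steps would be: (1) Recall from \cite{Simon95} the abstract Wonderland framework and check that $({\rm SA}(H),\mathrm{SRT})$, being Polish hence completely metrizable, is an admissible space; the SRT is exactly the topology for which $A\mapsto f(A)$ is continuous (SOT) for bounded continuous $f$ vanishing at infinity, which is what Wonderland arguments use. (2) Show density of operators with pure point spectrum: every $A\in{\rm SA}(H)$ is an SRT-limit of diagonalizable operators --- this is essentially the Weyl--von Neumann theorem, already cited in the introduction, or can be seen directly by spectral approximation --- so diagonal (hence pure point) operators are SRT-dense. (3) Show density of operators with purely absolutely continuous spectrum: here I would build, for any target $A$ and any SRT-basic neighbourhood (controlled by finitely many vectors $\xi_1,\dots,\xi_m$ and the resolvent at $i$), an operator unitarily equivalent to (a large multiple of) multiplication by $x$ on $L^2(\mathbb R)$ truncated and matched to $A$ on the relevant finite-dimensional data; a multiplicity-$\infty$ multiplication operator has purely a.c.\ spectrum equal to $\mathbb R$, and SRT only sees finitely many matrix coefficients of the resolvent, so matching is possible. (4) Feed (2) and (3) into the Wonderland Theorem to conclude that $\{A:\sigma_{\mathrm p}(A)=\sigma_{\mathrm{ac}}(A)=\emptyset\}$ is a dense $G_\delta$. (5) Separately, handle $\sigma_{\mathrm{sc}}(A)=\mathbb R$: for each rational interval $I$ the set $\{A:\sigma(A)\supset I\}$ is SRT-$G_\delta$ and dense (this is close to \cite[Theorem 3.17]{AM14}, where $\sigma_{\mathrm{ess}}(A)=\mathbb R$ was shown generic), and on the generic set from (4) the spectrum has no point or a.c.\ part, so $\sigma(A)=\sigma_{\mathrm{sc}}(A)$ there; intersecting over rational $I$ gives $\sigma_{\mathrm{sc}}(A)=\mathbb R$ generically. (6) Intersect the two dense $G_\delta$ sets.

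The main obstacle I anticipate is step (3): producing, inside an arbitrary SRT-neighbourhood of an arbitrary self-adjoint operator, an operator with \emph{purely} absolutely continuous spectrum. One must be careful that the approximant genuinely has no point and no singular continuous part --- a finite-rank or compact perturbation of a diagonal operator will not do --- so the construction should start from a canonical purely a.c.\ model (countably many copies of multiplication by $x$ on $L^2(\mathbb R)$, which has spectrum $\mathbb R$ with uniform infinite multiplicity and purely a.c.\ spectral type) and then adjust only finitely much data. The point is that an SRT-neighbourhood of $A$ determined by $\xi_1,\dots,\xi_m$ constrains only the compression of $(A-i)^{-1}$ to an at most $m$-dimensional subspace, up to $\varepsilon$; since the a.c.\ model has infinite multiplicity everywhere, one has enough room to arrange a partial isometry making these finitely many resolvent matrix entries agree with those of $A$ to within $\varepsilon$, while the remaining infinite-multiplicity part keeps the spectral type purely a.c. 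A secondary subtlety is verifying the precise hypotheses of Simon's Wonderland Theorem in our setting (the metric, separability, and that the a.c.- and p.p.-dense sets are the right ones); once those bookkeeping points are in place, steps (1), (2), (4), (5), (6) are routine applications of Baire category and the cited results.
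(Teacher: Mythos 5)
Your overall architecture is sound and close in spirit to the paper's: both arguments run Simon's Wonderland Theorem (Theorem \ref{thm: Simon Wonderland}) on $X={\rm SA}(H)$ interval by interval, and both use the genericity of full essential spectrum from \cite[Theorem 3.17 (1)]{AM14} to supply the ``$(a,b)\subset\sigma(A)$'' density. The genuine difference is how you feed the two spectral-type density hypotheses into the Wonderland machine. You propose a dense set of pure point operators (Weyl--von Neumann; easy) together with a dense set of purely absolutely continuous operators, which you correctly identify as the hard step. The paper instead produces a single dense family of operators with purely \emph{singular continuous} spectrum (Lemma \ref{lem: singular continuous approximation of 1} and Proposition \ref{prop: density part: no eigenvalues}): starting from a singular continuous probability measure $\mu$, multiplication by $\tfrac{1}{n}x+a$ on $L^{2}(\mathbb{R},\mu)$ is purely s.c.\ and SRT-converges to $a\cdot 1$; after a Weyl--von Neumann reduction to a diagonal operator one collapses to finitely many infinite-dimensional blocks and perturbs blockwise. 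Such operators are simultaneously purely continuous and purely singular, so one dense set verifies hypotheses (1) and (2) of Theorem \ref{thm: Simon Wonderland} at once, and no a.c.\ approximation is needed at all. That is the payoff of the paper's route; your route would in addition prove the (true, but not needed) fact that purely a.c.\ operators are SRT-dense, at the cost of the extra construction.

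Concerning that construction (your step (3)), the sketch contains a real inaccuracy: a basic SRT-neighbourhood of $A$ demands $\|(B-i)^{-1}\xi_{j}-(A-i)^{-1}\xi_{j}\|<\varepsilon$ for finitely many vectors $\xi_{j}$, i.e.\ it controls the resolvent applied to these vectors \emph{in norm as elements of $H$}, not merely ``finitely many matrix coefficients'' or ``the compression of $(A-i)^{-1}$ to an $m$-dimensional subspace.'' So matching finitely many matrix entries of the resolvent by a partial isometry does not, as stated, place your a.c.\ model inside the given neighbourhood, and this step needs repair before the plan closes. The standard repair is exactly parallel to the paper's Proposition \ref{prop: density part: no eigenvalues}: apply Weyl--von Neumann, collapse the resulting diagonal operator to finitely many infinite-dimensional eigenblocks $a_{i}1_{H_{i}}$ lying in the neighbourhood, and on each block replace $a_{i}1_{H_{i}}$ by $a_{i}+\tfrac{1}{n}M_{x}$ on $L^{2}([0,1],dx)\cong H_{i}$, which is purely a.c.\ and SRT-convergent to $a_{i}1_{H_{i}}$ (this is the paper's Lemma \ref{lem: singular continuous approximation of 1} with an a.c.\ measure in place of a singular continuous one). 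With that fix your steps (1), (2), (4)--(6) go through; note also that if you invoke the three-hypothesis form of Theorem \ref{thm: Simon Wonderland}, its conclusion already contains $(a,b)\subset\sigma_{\rm sc}(A)$, so your separate step (5) reduces to the density of $\{A;(-n,n)\subset\sigma(A)\}$, which is precisely how the paper uses \cite[Theorem 3.17 (1)]{AM14}.
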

The proof relies on the surprising theorem of Simon (which he calls ``Wonderland Theorem"). 
\begin{definition}\label{def: regular metric space of operators}\cite{Simon95} Let $(X,d)$ be a metric space of self-adjoint operators on $H$. $X$ is called a {\it regular metric space}, if $d$ is complete and generates a topology stronger than or equal to SRT. 
\end{definition}

\begin{theorem}[Simon's Wonderland Theorem]\label{thm: Simon Wonderland}
Let $(X,d)$ be a regular metric space of self-adjoint operators on $H$. Suppose that for some open interval $(a,b)$, 
\begin{itemize}
\item[{\rm{(1)}}] $\{A\in X;A\text{\ has purely continuous spectrum on\ }(a,b)\}$ is dense in $X$.
\item[{\rm{(2)}}] $\{A\in X;A\text{\ has purely singular spectrum on\ }(a,b)\}$ is dense in $X$.
\item[{\rm{(3)}}] $\{A\in X;A\text{\ has\ }(a,b)\text{\ in its spectrum}\}$ is dense in $X$.
\end{itemize} 
Then $\{A\in X; (a,b)\subset \sigma_{\rm{sc}}(A),\ (a,b)\cap \sigma_{\rm{p}}(A)=\emptyset,\ (a,b)\cap \sigma_{\rm{ac}}(A)=\emptyset\}$ is dense $G_{\delta}$ in $X$.
\end{theorem}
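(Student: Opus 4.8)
The plan is to identify the target set $\mathcal{R}:=\{A\in X:(a,b)\subseteq\sigma_{\mathrm{sc}}(A),\ (a,b)\cap\sigma_{\mathrm{p}}(A)=\emptyset,\ (a,b)\cap\sigma_{\mathrm{ac}}(A)=\emptyset\}$ with the intersection of the three sets appearing in hypotheses (1)--(3) and then to run a Baire category argument in the complete metric space $(X,d)$. Write $\mathcal{R}_1:=\{A\in X:\sigma_{\mathrm p}(A)\cap(a,b)=\emptyset\}$, $\mathcal{R}_2:=\{A\in X:\sigma_{\mathrm{ac}}(A)\cap(a,b)=\emptyset\}$ and $\mathcal{R}_3:=\{A\in X:(a,b)\subseteq\sigma(A)\}$; these are exactly the sets asserted to be dense by (1), (2) and (3). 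The first step is the spectral-theoretic identity $\mathcal{R}=\mathcal{R}_1\cap\mathcal{R}_2\cap\mathcal{R}_3$: using the decomposition $\sigma(A)=\overline{\sigma_{\mathrm p}(A)}\cup\sigma_{\mathrm{ac}}(A)\cup\sigma_{\mathrm{sc}}(A)$ coming from $H=H_{\mathrm{pp}}\oplus H_{\mathrm{ac}}\oplus H_{\mathrm{sc}}$, together with the observation that $\sigma_{\mathrm p}(A)\cap(a,b)=\emptyset$ forces $\overline{\sigma_{\mathrm p}(A)}\cap(a,b)=\emptyset$ (because $\mathbb{R}\setminus(a,b)$ is closed), one gets $\sigma(A)\cap(a,b)=\sigma_{\mathrm{sc}}(A)\cap(a,b)$ for every $A\in\mathcal{R}_1\cap\mathcal{R}_2$, whence $A\in\mathcal{R}_3$ yields $(a,b)\subseteq\sigma_{\mathrm{sc}}(A)$; the reverse inclusions $\mathcal{R}\subseteq\mathcal{R}_i$ are immediate (for $i=3$ via $\sigma_{\mathrm{sc}}(A)\subseteq\sigma(A)$). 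Since $(X,d)$ is complete, hence Baire, it then suffices to prove that each $\mathcal{R}_i$ is a dense $G_\delta$ subset of $(X,d)$, for then $\mathcal{R}$ is a dense $G_\delta$.

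Density of $\mathcal{R}_1,\mathcal{R}_2,\mathcal{R}_3$ is precisely hypotheses (1), (2), (3). The content is therefore that each is $G_\delta$; since the $d$-topology refines SRT it is enough to prove this for SRT. Fix a countable dense set $\{\phi_n\}_{n\ge1}\subseteq H$ with spectral measures $\mu^A_{\phi_n}$, and recall that SRT-convergence $A_k\to A$ gives $(A_k-z)^{-1}\to(A-z)^{-1}$ strongly for every $z\in\mathbb{C}\setminus\mathbb{R}$, and hence weak convergence $\mu^{A_k}_{\phi_n}\to\mu^A_{\phi_n}$ (the total masses being the constant $\|\phi_n\|^2$). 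For $\mathcal{R}_3$: $(a,b)\subseteq\sigma(A)$ iff $E_A((c,d))\neq0$ for all rationals $c<d$ in $(a,b)$, and $E_A((c,d))\neq0$ iff $\mu^A_{\phi_n}((c,d))>0$ for some $n$; since the portmanteau theorem makes $A\mapsto\mu^A_{\phi_n}((c,d))$ lower semicontinuous for the open set $(c,d)$, the set $\{A:E_A((c,d))\neq0\}$ is SRT-open, and $\mathcal{R}_3$ is a countable intersection of such sets, hence $G_\delta$. For $\mathcal{R}_1$: for a compact $[a',b']\subseteq(a,b)$, $\eta>0$ and $n$, the set of $A$ for which $\mu^A_{\phi_n}$ carries an atom of mass $\ge\eta$ inside $[a',b']$ is SRT-closed (if $\mu^{A_k}_{\phi_n}$ has such an atom at $\lambda_k\to\lambda\in[a',b']$, testing weak convergence against shrinking closed neighbourhoods of $\lambda$ gives $\mu^A_{\phi_n}(\{\lambda\})\ge\eta$); since $A\notin\mathcal{R}_1$ iff some $\mu^A_{\phi_n}$ has an atom in $(a,b)$, the complement of $\mathcal{R}_1$ is a countable union of such closed sets, so $\mathcal{R}_1$ is $G_\delta$.

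The main obstacle is $\mathcal{R}_2$, where the absolutely continuous spectrum must be read off from SRT-Borel data. The plan is to use boundary values of the Borel--Stieltjes transform: for $n\ge1$, rationals $a<a'<b'<b$ and $y>0$ set
\[g_{n,a',b',y}(A):=\frac1\pi\int_{a'}^{b'}\min\!\bigl(\operatorname{Im}\langle\phi_n,(A-x-iy)^{-1}\phi_n\rangle,\ 1\bigr)\,dx,\]
whose integrand equals $\min\bigl((P_y*\mu^A_{\phi_n})(x),1\bigr)$ with $P_y$ the Poisson kernel. For fixed $y>0$ this is SRT-continuous in $A$ and continuous in $y$ (dominated convergence, the integrand being $\le1$ on the compact $[a',b']$). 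By Fatou's theorem for Poisson integrals of a finite measure, $(P_y*\mu^A_{\phi_n})(x)$ converges as $y\downarrow0$, for Lebesgue-a.e.\ $x$, to the symmetric derivative of $\mu^A_{\phi_n}$, which equals the Radon--Nikod\'ym derivative of its absolutely continuous part a.e.\ (the symmetric derivative of the singular part vanishing a.e.); dominated convergence then shows that $\lim_{y\downarrow0}g_{n,a',b',y}(A)$ exists and equals $\int_{a'}^{b'}\min\bigl(\tfrac{d}{dx}(\mu^A_{\phi_n})_{\mathrm{ac}}(x),1\bigr)\,dx$, which is $0$ exactly when the absolutely continuous part of $\mu^A_{\phi_n}$ on $[a',b']$ vanishes. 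Establishing this limit formula — the harmonic-analytic heart of the argument — is the delicate point.

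It remains to assemble the conclusion. Since $\sigma_{\mathrm{ac}}(A)\cap(a,b)=\emptyset$ iff $E_A((a,b))P_{\mathrm{ac}}=0$ iff the absolutely continuous part of $\mu^A_{\phi_n}$ on $(a,b)$ vanishes for every $n$, the previous paragraph gives $\mathcal{R}_2=\bigcap_n\bigcap_{a',b'}\{A:\liminf_{y\downarrow0}g_{n,a',b',y}(A)=0\}$, the inner intersection being over rationals exhausting $(a,b)$ (and $\liminf$ may replace $\lim$ since the limit exists). Because $g_{n,a',b',y}\ge0$, is continuous in $y$ (so only rational $y$ matter) and is SRT-continuous in $A$, each set here equals $\bigcap_{y_0\in\mathbb{Q}_{>0}}\bigcap_{m\ge1}\bigcup_{y\in\mathbb{Q}\cap(0,y_0)}\{A:g_{n,a',b',y}(A)<1/m\}$, a countable intersection of SRT-open sets, hence $G_\delta$; therefore $\mathcal{R}_2$ is $G_\delta$. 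Thus $\mathcal{R}_1,\mathcal{R}_2,\mathcal{R}_3$ are dense $G_\delta$ subsets of the Baire space $(X,d)$, so $\mathcal{R}=\mathcal{R}_1\cap\mathcal{R}_2\cap\mathcal{R}_3$ is dense $G_\delta$ in $X$, which is the assertion.
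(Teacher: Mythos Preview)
The paper does not give its own proof of this theorem: it is quoted from Simon \cite{Simon95} and used as a black box in the proof of Theorem~\ref{thm: Wonderland for SA(H)}. So there is nothing in the paper to compare against.

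That said, your argument is correct and is essentially Simon's original proof. The decomposition $\mathcal{R}=\mathcal{R}_1\cap\mathcal{R}_2\cap\mathcal{R}_3$ is valid (your remark that $\sigma_{\rm p}(A)\cap(a,b)=\emptyset$ forces $\overline{\sigma_{\rm p}(A)}\cap(a,b)=\emptyset$ because $\mathbb{R}\setminus(a,b)$ is closed is the right observation). The $G_\delta$ claims for $\mathcal{R}_1$ and $\mathcal{R}_3$ via portmanteau are standard, and for $\mathcal{R}_2$ your use of the truncated Poisson integral
\[
g_{n,a',b',y}(A)=\frac{1}{\pi}\int_{a'}^{b'}\min\bigl(\operatorname{Im}\langle\phi_n,(A-x-iy)^{-1}\phi_n\rangle,\,1\bigr)\,dx
\]
together with Fatou's theorem on a.e.\ boundary values of Poisson integrals of finite measures is exactly the mechanism in \cite{Simon95}. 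One small point you could make more explicit is the weak convergence $\mu^{A_k}_{\phi_n}\to\mu^A_{\phi_n}$: the cleanest route is that SRT-convergence gives $f(A_k)\to f(A)$ strongly for every bounded continuous $f$, hence $\int f\,d\mu^{A_k}_{\phi_n}\to\int f\,d\mu^{A}_{\phi_n}$, which is weak convergence on the nose; this avoids any tightness discussion. Otherwise the write-up is sound.
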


First we prove the density. 
\begin{proposition}\label{prop: density part: no eigenvalues}
The set $\{A\in {\rm{SA}}(H);\sigma_{\rm{p}}(A)=\sigma_{\rm{ac}}(A)=\emptyset\}$ is dense in ${\rm{SA}}(H)$.
\end{proposition}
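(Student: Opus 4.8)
The plan is to combine the Weyl--von Neumann theorem with a perturbation that ``smears'' eigenvalues into singular continuous spectrum. Since the strong resolvent topology is generated by the maps $A\mapsto (A-i)^{-1}\xi$, it suffices, given $A\in{\rm{SA}}(H)$, vectors $\xi_1,\dots,\xi_m\in H$ and $\varepsilon>0$, to produce $B\in{\rm{SA}}(H)$ with $\sigma_{\rm{p}}(B)=\sigma_{\rm{ac}}(B)=\emptyset$ and $\|(B-i)^{-1}\xi_j-(A-i)^{-1}\xi_j\|<\varepsilon$ for all $j$. First I would apply the Weyl--von Neumann theorem (in its form valid for unbounded operators) to replace $A$ by a \emph{diagonalizable} operator $D$, i.e.\ one with a complete orthonormal system of eigenvectors $De_n=\lambda_n e_n$, such that $\|D-A\|$ --- and hence $\|(D-i)^{-1}-(A-i)^{-1}\|$, by the resolvent identity together with $\|(\,\cdot\,-i)^{-1}\|\le 1$ --- is as small as desired. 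So we may assume $A=D$ is diagonal; the content of the proposition is that even such a $D$ lies in the closure of the set of operators with no eigenvalues and no a.c.\ part.

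Next, fix a purely singular continuous probability measure $\mu$ on $[0,1]$ --- the normalized standard Cantor measure will do --- and on $H\otimes L^2([0,1],\mu)$ set $\widehat B:=D\otimes I+\delta\,(I\otimes M_x)$ with $\delta>0$ small; $\widehat B$ is self-adjoint. Decomposing along the eigenbasis of $D$ identifies $\widehat B$ with $\bigoplus_n M_{\lambda_n+\delta x}$ on $\bigoplus_n L^2([0,1],\mu)$. Each summand has no eigenvalues, since its symbol is injective and $\mu$ is non-atomic, and no absolutely continuous part, since its spectral measures are affine pushforwards of measures absolutely continuous with respect to $\mu$, hence Lebesgue-singular. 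As $\sigma_{\rm{p}}(\bigoplus_n C_n)=\bigcup_n\sigma_{\rm{p}}(C_n)$ and the a.c.\ subspace of a direct sum is the direct sum of the a.c.\ subspaces, we get $\sigma_{\rm{p}}(\widehat B)=\sigma_{\rm{ac}}(\widehat B)=\emptyset$. Moreover, writing $\mathbf 1\in L^2(\mu)$ for the unit constant function, the elementary estimate $\big|\tfrac{1}{\lambda+\delta x-i}-\tfrac{1}{\lambda-i}\big|\le\delta|x|\le\delta$ yields $\|(\widehat B-i)^{-1}(\xi\otimes\mathbf 1)-((D-i)^{-1}\xi)\otimes\mathbf 1\|\le\delta\|\xi\|$ for every $\xi\in H$.

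To finish I would transport $\widehat B$ back to $H$. Choose a unitary $U\colon H\otimes L^2(\mu)\to H$ with $U(\xi_j\otimes\mathbf 1)=\xi_j$ and $U(((D-i)^{-1}\xi_j)\otimes\mathbf 1)=(D-i)^{-1}\xi_j$ for $j=1,\dots,m$; this exists because $v\mapsto v\otimes\mathbf 1$ is isometric, so the two finite families have the same Gram matrix, and both orthogonal complements are separable and infinite-dimensional. Put $B:=U\widehat BU^*\in{\rm{SA}}(H)$. Then $B$ is unitarily equivalent to $\widehat B$, hence $\sigma_{\rm{p}}(B)=\sigma_{\rm{ac}}(B)=\emptyset$, while $(B-i)^{-1}\xi_j=U(\widehat B-i)^{-1}(\xi_j\otimes\mathbf 1)$ lies within $\delta\|\xi_j\|$ of $U(((D-i)^{-1}\xi_j)\otimes\mathbf 1)=(D-i)^{-1}\xi_j$; taking $\delta$ and the Weyl--von Neumann error small enough places $B$ in the required neighbourhood of $A$. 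I expect the main obstacle to be the control of the absolutely continuous part: applied to a general $A$, the smearing trick would leave any a.c.\ part intact and might even manufacture one out of a singular continuous part, so the reduction to a diagonalizable operator --- which makes all the relevant spectral measures atomic, so that ``convolving'' with $\mu$ only produces a countable sum of translates of the singular measure $\mu$ --- is what makes the argument work. The only remaining technicality is to choose the re-identification unitary $U$ so that it fixes the vectors $(D-i)^{-1}\xi_j$ in addition to the $\xi_j$.
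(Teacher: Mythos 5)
Your argument is correct. The estimate $\bigl|\tfrac{1}{\lambda+\delta x-i}-\tfrac{1}{\lambda-i}\bigr|\le\delta$ is valid uniformly in $\lambda\in\mathbb{R}$ because both denominators have modulus at least $1$, the fibered description of $\widehat B$ as $\bigoplus_n M_{\lambda_n+\delta x}$ does give $\sigma_{\rm p}(\widehat B)=\sigma_{\rm ac}(\widehat B)=\emptyset$ (each fiber spectral measure is a nondegenerate affine pushforward of a measure absolutely continuous with respect to the non-atomic, Lebesgue-singular $\mu$), and your transport unitary $U$ exists for exactly the reason you give: the Gram matrices of the two finite families agree and both orthogonal complements are infinite-dimensional (you are right that one cannot demand $U(v\otimes\mathbf 1)=v$ for all $v$, only for the finitely many test vectors). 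The route, however, differs from the paper's in its second half. The paper stays inside $H$ throughout: after the Weyl--von Neumann step it first replaces the diagonal operator $A_0$ by an operator $A_{k_0}$ having only finitely many eigenvalues, each of infinite multiplicity (this is the step that creates infinite-dimensional eigenspaces), and then, on each such eigenspace identified with $L^2(\mathbb{R},\mu)$ for a singular continuous $\mu$, approximates the scalar $a_i 1$ in SRT by multiplication operators $\tfrac1m x+a_i$; the conclusion comes from two successive qualitative SRT-limit arguments, with no need to change the Hilbert space or to control all eigenvalues at once. Your version trades that truncation for a tensor-product ``smearing'' $D\otimes I+\delta(I\otimes M_x)$, which handles all eigenvalues simultaneously thanks to the uniform resolvent bound, at the cost of the re-identification unitary and the bookkeeping that it fix both $\xi_j$ and $(D-i)^{-1}\xi_j$. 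Both proofs rest on the same two pillars (Weyl--von Neumann plus multiplication by a fixed singular continuous measure); yours is more quantitative and avoids the infinite-multiplicity intermediate operator, while the paper's avoids leaving ${\rm{SA}}(H)$ and any explicit estimates.
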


\begin{lemma}\label{lem: singular continuous approximation of 1}
Let $H$ be an infinite-dimensional separable Hilbert space. There exists a sequence $\{A_n\}_{n=1}^{\infty}\subset {\rm{SA}}(H)$ with purely singular continuous spectrum, such that $A_n\stackrel{\rm{SRT}}{\to}1_H$.
\end{lemma}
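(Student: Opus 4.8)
The plan is to reduce the statement to the existence of a single bounded self-adjoint operator with purely singular continuous spectrum, and then to push it toward $1_H$ by an increasing affine change of the spectral variable, which does not affect the type (point / absolutely continuous / singular continuous) of the spectral measure. So suppose first that $S \in {\rm{SA}}(H)$ is bounded, has purely singular continuous spectrum, and (after rescaling) satisfies $\sigma(S) \subset [0,1]$. I would then set $A_n := 1_H + \tfrac1n S$ for $n \ge 1$. Each $A_n$ is bounded self-adjoint, and because $t \mapsto 1 + t/n$ is an increasing affine bijection of $\mathbb{R}$, the spectral measure of $A_n$ is the image of that of $S$ under this map; hence $\sigma_{\rm{p}}(A_n) = \emptyset = \sigma_{\rm{ac}}(A_n)$ while $\sigma_{\rm{sc}}(A_n) = 1 + \tfrac1n \sigma_{\rm{sc}}(S) \ne \emptyset$, so $A_n$ has purely singular continuous spectrum. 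Finally $\|A_n - 1_H\| \le \tfrac1n \|S\| \to 0$, and from the resolvent identity $(A_n - i)^{-1} - (1_H - i)^{-1} = (A_n - i)^{-1}(1_H - A_n)(1_H - i)^{-1}$ we get $\|(A_n - i)^{-1} - (1_H - i)^{-1}\| \le \|A_n - 1_H\| \to 0$; in particular $(A_n - i)^{-1} \to (1_H - i)^{-1}$ in SOT, i.e.\ $A_n \to 1_H$ in SRT.

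It remains to exhibit one such $S$. I would take $\mu$ to be a non-atomic Borel probability measure on $[0,1]$ which is singular with respect to Lebesgue measure (for instance the standard Cantor measure). Since $\mu$ is non-atomic with uncountable support, $L^2([0,1],\mu)$ is a separable infinite-dimensional Hilbert space; fixing a unitary $V \colon L^2([0,1],\mu) \to H$ I set $S := V M V^*$, where $M$ is multiplication by the coordinate function on $L^2([0,1],\mu)$. Then $S$ is bounded self-adjoint with spectral measure unitarily equivalent to $\mu$: as $\mu$ has no atoms, $S$ has no eigenvalues, and as $\mu$ is Lebesgue-singular, $S$ has no absolutely continuous part, so $S$ has purely singular continuous spectrum and $\sigma(S) = {\rm supp}(\mu) \subset [0,1]$.

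I do not expect a serious obstacle: the only substantive point is the classical existence of singular continuous spectra, and multiplication operators attached to Cantor-type measures furnish explicit witnesses; the affine rescaling and the passage from norm convergence to SRT convergence are routine. (Note that we do not --- and need not --- claim $\sigma(A_n) = \mathbb{R}$ here; the stronger assertion that $\sigma_{\rm{sc}}(A) = \mathbb{R}$ for generic $A$ will come from Simon's Wonderland Theorem, Theorem \ref{thm: Simon Wonderland}.)
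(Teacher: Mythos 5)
Your proposal is correct and follows essentially the same route as the paper: the paper also takes multiplication by $f_n(x)=\tfrac1n x+1$ on $L^2(\mathbb{R},\mu)$ for a singular continuous probability measure $\mu$, which is exactly your $1_H+\tfrac1n S$ up to allowing $\mu$ to have unbounded support (the paper then gets SRT convergence by dominated convergence, whereas your compactly supported Cantor measure gives the slightly stronger norm resolvent convergence). All the verifications you give (preservation of spectral type under the affine map, absence of atoms and of an absolutely continuous part, and passage from norm to SRT convergence) are sound.
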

\begin{proof}
Let $\mu$ be a singular continuous probability measure on $\mathbb{R}$. We identify $H=L^2(\mathbb{R},\mu)$, and define $A_n$ to be the multiplication by $f_n$, where $f_n(x):=\frac{1}{n}x+1\ \ (x\in \mathbb{R},n\in \mathbb{N})$. Then each $A_n$ has purely singular continuous spectrum, and $A_n\stackrel{\rm{SRT}}{\to}1_H$ by Lebesgue Dominated Convergence Theorem.
\end{proof}
\begin{proof}[Proof of Proposition \ref{prop: density part: no eigenvalues}]
Let $A\in {\rm{SA}}(H)$ and let $\mathcal{V}$ be an SRT-open neighborhood of $A$. By Weyl-von Neumann Theorem, there exists $A_0\in \mathcal{V}$ of the form $A_0=\sum_{n=1}^{\infty}a_n\nai{\xi_n}{\ \cdot\ }\xi_n$, where $\{a_n\}_{n=1}^{\infty}\subset \mathbb{R}$ and $\{\xi_n\}_{n=1}^{\infty}$ is an orthonormal basis for $H$. Let $e_n$ be the orthogonal projection of $H$ onto $\mathbb{C}\xi_n\ (n\in \mathbb{N})$. Let $k\in \mathbb{N}$. 
Choose a sequence of disjoint subsets $I_1^{(k)},I_2^{(k)},\cdots, I_k^{(k)}$ of $\mathbb{N}\setminus \{1,2,\cdots,k\}$ such that $|I_1^{(k)}|=|I_2^{(k)}|=\cdots=|I_k^{(k)}|=\infty$ and $\mathbb{N}\setminus \{1,\cdots,k\}=\bigsqcup_{i=1}^kI_i^{(k)}$. Then for each $1\le i\le k$, let $e_i^{(k)}$ be the projection of $H$ onto the closed linear span of $\{\xi_m;m\in I_i^{(k)}\}$, which is of infinite-rank.  Define a new operator $A_k\in {\rm{SA}}(H)$ by $A_k:=\sum_{n=1}^ka_ne_n+\sum_{n=1}^ka_ne_n^{(k)}$. Then $A_k\stackrel{k\to \infty}{\to}A_0$ (SRT), so that there exists $k_0\in \mathbb{N}$ such that $A_{k_0}\in \mathcal{V}$ holds. Now let $H_i\ (1\le i\le k_0)$ be the range of $e_i+e_i^{(k_0)}$, which is infinite-dimensional. Thus by Lemma \ref{lem: singular continuous approximation of 1}, we may find a sequence $\{A_{i,m}\}_{m=1}^{\infty}\subset {\rm{SA}}(H_i)$ with $\sigma_{\rm{p}}(A_{i,m})=\sigma_{\rm{ac}}(A_{m,i})=\emptyset\ (m\in \mathbb{N})$ such that $A_{i,m}\stackrel{m\to \infty}{\to}a_i1_{H_i}$ (SRT) for each $1\le i\le k_0$. Let $A_m:=\bigoplus_{i=1}^{k_0}A_{i,m}\in {\rm{SA}}(H)\ (m\in \mathbb{N})$. It follows that $A_m\stackrel{m\to \infty}{\to}A_{k_0}=\sum_{i=1}^{k_0}a_i(e_i+e_i^{(k_0)})\in \mathcal{V}$ (SRT), so that there exists $m_0\in \mathbb{N}$ such that $A_{m_0}\in \mathcal{V}$. Since $\sigma_{\rm{p}}(A_{m_0})=\sigma_{\rm{ac}}(A_{m_0})=\emptyset$ and $\mathcal{V}$ is arbitrary, the claim follows.  
\end{proof}

\begin{proof}[Proof of Theorem \ref{thm: Wonderland for SA(H)}]
For each $n\in \mathbb{N}$ define 
\[G_n:=\{A\in {\rm{SA}}(H); \sigma_{\rm{p}}(A)\cap (-n,n)=\sigma_{\rm{ac}}(A)\cap (-n,n)=\emptyset,\ (-n,n)\subset \sigma_{\rm{sc}}(A)\}.\] 
Since $\mathcal{G}=\bigcap_{n\in \mathbb{N}}G_n$, it suffices to show that each $G_n$ is dense $G_{\delta}$ in ${\rm{SA}}(H)$. We see that assumptions of Theorem \ref{thm: Simon Wonderland} are satisfied for $X={\rm{SA}}(H)$ with $(a,b)=(-n,n)$:\\
(1) and (2): the sets 
\[\{A\in {\rm{SA}}(H); A\text{\ has purely continuous spectrum on\ }(-n,n)\}\] and 
\[\{A\in {\rm{SA}}(H); A\text{\ has purely singular spectrum on\ }(-n,n)\}\]
are dense in ${\rm{SA}}(H)$, by Proposition \ref{prop: density part: no eigenvalues}.\\
(3): By \cite[Theorem 3.17 (1)]{AM14}, the set ${\rm{SA}}_{\rm{full}}(H)=\{A\in {\rm{SA}}(H);\sigma_{\rm{ess}}(A)=\mathbb{R}\}$ is a dense $G_{\delta}$ subset of ${\rm{SA}}(H)$. In particular, $\{A\in {\rm{SA}}(H); (-n,n)\subset \sigma(A)\}$ is dense in ${\rm{SA}}(H)$.\\
Therefore By Theorem \ref{thm: Simon Wonderland}, $G_n$ is dense $G_{\delta}$ in ${\rm{SA}}(H)$ for every $n\in \mathbb{N}$, which finishes the proof.
\end{proof}
\section*{Acknowledgments}
The authors would like to thank the anonymous referee for numerous suggestions which improved the presentation of the paper. 
HA was supported by the Danish National Research Foundation through the Centre for Symmetry and Deformation (DNRF92). YM was supported by KAKENHI 26800055 and 26350231. 

Hiroshi Ando\\
Department of Mathematical Sciences,\\
University of Copenhagen\\
Universitetsparken 5\\
2100 Copenhagen \O\ Denmark\\
ando@math.ku.dk\\ 
\url{http://andonuts.miraiserver.com/index.html}\\ \\
Yasumichi Matsuzawa\\
Department of Mathematics, Faculty of Education, Shinshu University\\
6-Ro, Nishi-nagano, Nagano, 380-8544, Japan\\
myasu@shinshu-u.ac.jp\\
\url{https://sites.google.com/site/yasumichimatsuzawa/home}
\end{document}